 \documentclass{amsart}
\usepackage{amsmath,amsthm,amsfonts, amssymb,amscd}
\usepackage[all]{xy}

\newtheorem{Theorem}{Theorem}[section]
\newtheorem{Proposition}[Theorem]{Proposition}
\newtheorem{Lemma}[Theorem]{Lemma}

\newtheorem{Example}[Theorem]{Example}
\theoremstyle{remark}
\newtheorem{Remark}[Theorem]{Remark}

\numberwithin{Claim}{Theorem}

 \theoremstyle{definition}

\newcommand{\Ker}{\mbox{\rm Ker}}

\newcommand{\id}{\mbox{\rm id}}
\newcommand{\Aff}{\mbox{\rm Aff}}

\long\def\alert#1{\smallskip{\hskip\parindent\vrule%
\vbox{\advance\hsize-2\parindent\hrule\smallskip\parindent.4\parindent%
\narrower\noindent#1\smallskip\hrule}\vrule\hfill}\smallskip}

\begin{document}
\title[Loomis--Sikorski Theorem
and Stone Duality for Effect Algebras]{Loomis--Sikorski Theorem and
Stone Duality for Effect Algebras with Internal State}
\author{David Buhagiar$^1$, Emmanuel Chetcuti$^1$, and Anatolij Dvure\v censkij$^2$}
\date{}
\maketitle
\begin{center}  \footnote{Keywords:
Effect algebra, state, state-operator, Riesz decomposition property,
unital po-group, simplex, Bauer simplex, Choquet simplex,
Loomis--Sikorski Theorem, Stone duality,

AMS classification:  46C15, 81P10, 03G12

The paper has been supported by the Center of Excellence SAS
-~Quantum Technologies, ERDF OP R\&D Projects CE QUTE ITMS
26240120009 and meta-QUTE ITMS 26240120022, the grant VEGA No.
2/0032/09 SAV, by  the Slovak Research and Development Agency under
the contract APVV-0071-06, Bratislava. Third author thanks
University of Malta for hospitality during May 2010. }
Department of Mathematics, Faculty of Sciences,\\
University of Malta, Msida MSD 2080, Malta,\\
 $^2$Mathematical Institute,  Slovak Academy of Sciences,\\
\v Stef\'anikova 49, SK-814 73 Bratislava, Slovakia\\E-mail: {\tt
david.buhagiar@um.edu.mt}, \ {\tt emanuel.chetcuti@um.edu.mt}\\ {\tt
dvurecen@mat.savba.sk}
\end{center}


\begin{abstract}
Recently Flaminio and Montagna, \cite{FlMo}, extended the language
of MV-algebras  by adding a unary operation, called a
state-operator. This notion is introduced here also for effect
algebras. Having it, we generalize the Loomis--Sikorski Theorem  for
monotone $\sigma$-complete effect algebras with internal state. In
addition, we show that the category of   divisible state-morphism
effect algebras  satisfying (RDP) and countable interpolation with
an order determining system  of states is dual to the category of
Bauer simplices $\Omega$ such that $\partial_e \Omega$ is an F-space
\end{abstract}
\date{}
\maketitle

\section{Introduction}

The famous Loomis--Sikorski Theorem  was proved independently by two
authors, Sikorski and Loomis,   \cite{Sik, Loo}, after the Second
World War, and nowadays it has many serious applications in
different areas of mathematics.   Roughly speaking it states that
every $\sigma$-complete Boolean algebra is a $\sigma$-algebra of
subsets of a set up to some modulo, or, precisely every
$\sigma$-complete Boolean algebra is a $\sigma$-epimorphic image of
some $\sigma$-algebra of subsets. It can be rewritten also in the
form that our $\sigma$-algebra is practically an appropriate system
of $[0,1]$-valued functions; in our case it is a system of
characteristic functions, where the Boolean operations on the set of
functions are defined by points.

This was extended also for $\sigma$-complete MV-algebras in
\cite{Dvu1, Mun1, BaWe} showing that every $\sigma$-complete
MV-algebra is a $\sigma$-epimorphic image of a system of
$[0,1]$-valued functions, called a {\it tribe}, where again
MV-operations on functions in the tribe are defined by points.

In the Nineties, the theory of quantum structures was enriched  by
new structures, called {\it effect algebras}, see \cite{FoBe}. They
are inspired by the mathematical foundations of quantum mechanics
(for an overview on effect algebras, see \cite{DvPu}) as well as by
many valued features of quantum mechanical measurements. One of the
most important examples of effect algebras studied in quantum
mechanics is the system ${\mathcal E}(H)$ of all Hermitian operators
$A$ on a Hilbert space $H$ such that $O \le A \le I$, where $O$ and
$I$ are the zero and identity operator on $H$. The category of
effect algebras contains Boolean algebras, orthomodular lattices,
orthomodular posets and orthoalgebras.

The Loomis--Sikorski Theorem was extended also for monotone
$\sigma$-complete effect algebras by the present authors in
\cite{BCD}.

The notion of a state, an analogue of probability measure, is a
basic notion for quantum structures. It is motivated by the notion
of a state in quantum mechanics. The set of all states on an effect
algebra can be a good source of information on the given system, and
it will be also   deeply used in our paper.

Recently, the notion of a state was generalized by \cite{FlMo} to an
algebraically defined notion for MV-algebras.  They enlarged the
language of MV-algebras introducing  a unary operation, $\tau,$
called an {\it internal state} or a {\it state-operator}. Such
MV-algebras are called {\it state MV-algebras}. These algebras are
now intensively studied e.g. in \cite{DiDv, DDL1, DDL2, DDL3}.

One of important properties of a state-operator $\tau$ on an
MV-algebra is $\tau^2 = \tau,$ idempotency. Inspired by this, in the
present paper, we introduce  notions of a (i) state-operator for
effect algebras as an endomorphism $\tau: E \to E$ such that
$\tau\circ \tau = \tau,$ (ii) a strong state-operator, and (iii) a
state-morphism-operator with some additional properties coming from
state MV-algebras. The later two coincide with ones for MV-algebras.
We add it to the language of effect algebras as an internal state
and they will form a so-called {\it state effect algebras}.
Moreover, given an integer $n,$ we introduce an $n$-state-operator
as an endomorphism $\tau: E\to E$ that in $n$-potent, i.e., $\tau^n
= \tau,$ and the couple $(E,\tau)$ is said to be $n$-{\it state
effect algebra}.

Besides the presentation of basic properties of state effect
algebras, we present  the following two main results:

\begin{enumerate}

\item
We generalize the Loomis--Sikorski Theorem for monotone
$\sigma$-complete $n$-state effect algebras with the Riesz
Decomposition Property ((RDP) for short) showing that it is always a
$\sigma$-monotone epimorphic image of  an effect tribe with (RDP),
an appropriate system of $[0,1]$-valued functions that is a
$\sigma$-complete effect algebra with pointwise defined effect
algebraic operations, and  with an $n$-state-operator induced by a
function.

\item
We show that the category   of  divisible state-morphism effect
algebras  satisfying (RDP) and countable interpolation  with an
order determining system  of states is dual to the category of Bauer
simplices whose objects are pairs $(\Omega,g),$ where $\Omega\ne
\emptyset$ is a Bauer simplex  such that $\partial_e \Omega$ is an
F-space (any two disjoint open $F_\sigma$ subsets of $\Omega$ have
disjoint closures) and $g:\Omega \to \Omega$ is a continuous
function such that $g^n=g.$ This is a Stone Duality Type Theorem,
and it generalizes the famous result by Stone \cite{Sto} that says
that the category of Boolean algebras is dual to the category of
Stone spaces (=  compact Hausdorff topological space with a base
consisting  of clopen sets).

\end{enumerate}

The paper is organized as follows. The basic properties of effect
algebras are gathered in Section 2.  The main object of our study,
state-operators and $n$-state-operators on effect algebras, are
introduced in Section 3. Our study will use facts on Choquet
simplices and their connection to effect algebras and this is
pointed out in Section 4. Section 5 characterizes
$n$-state-operators. Our first  goal, the Loomis--Sikorski Theorem
for $n$-state-operators on monotone $\sigma$-complete effect
algebras, is described in Section 6. Finally, Section 7 presents the
second main goal, it gives  Stone Type Dualities for some categories
of effect algebras and the categories of Bauer simplices whose
boundary is an F-space.

\section{Elements of Effect Algebras}

An {\it effect algebra} is by \cite{FoBe} a partial algebra $E =
(E;+,0,1)$ with a partially defined operation $+$ and two constant
elements $0$ and $1$  such that, for all $a,b,c \in E$,
\begin{enumerate}

\item[(i)] $a+b$ is defined in $E$ if and only if $b+a$ is defined, and in
such a case $a+b = b+a;$

\item[(ii)] $a+b$ and $(a+b)+c$ are defined if and
only if $b+c$ and $a+(b+c)$ are defined, and in such a case $(a+b)+c
= a+(b+c);$

\item[(iii)] for any $a \in E$, there exists a unique
element $a' \in E$ such that $a+a'=1;$

\item[(iv)] if $a+1$ is defined in $E$, then $a=0.$
\end{enumerate}

If we define $a \le b$ if and only if there exists an element $c \in
E$ such that $a+c = b$, then $\le$ is a partial ordering on $E$, and
we write $c:=b-a.$ It is clear that $a' = 1 - a$ for any $a \in E.$

For a comprehensive source on the theory of effect algebras, we
recommend \cite{DvPu}. A {\it state} on an effect algebra $E$ is any
mapping $s: \ E \to [0,1]$ such that (i) $s(1) = 1$, and (ii)
$s(a+b) = s(a) + s(b)$ whenever $a+b$ is defined in $E$.  We denote
by ${\mathcal S}(E)$ the set of all states on $E$. It can happen
that ${\mathcal S}(E)$ is empty, see e.g. \cite[Ex 4.2.4]{DvPu}.  A
state $s$ is said to be {\it extremal} if $s = \lambda s_1 +
(1-\lambda)s_2$ for $\lambda \in (0,1)$ implies $s = s_1 = s_2.$ By
$\partial_e{\mathcal S}(E)$ we denote the set of all extremal states
of ${\mathcal S}(E)$  on $E.$ We say that a net of states,
$\{s_\alpha\}$, on $E$ {\it weakly converges} to a state, $s,$ on
$E$ if $s_\alpha(a) \to s(a)$ for any $a \in E$. In this topology,
${\mathcal S}(E)$ is a compact Hausdorff topological space and every
state on $E$ lies in the weak closure of the convex hull of the
extremal states as it follows from the Krein-Mil'man Theorem,
\cite[Thm 5.17]{Goo}.

Let $G=(G;+,0)$ be an Abelian po-group (= partially ordered group).
An element $u\in G$ is said to be a {\it strong unit} if given $g\in
G,$ there is an integer $n\ge 1$ such that $g \le nu.$  If we set
$\Gamma(G,u)=[0,u]$ and  endow it  with the restriction of the group
addition, $+$,  then $\Gamma(G,u):=(\Gamma(G,u); +, 0,u)$ is an
effect algebra.

An effect algebra that is either of the form $\Gamma(G,u)$ for some
element $u\ge 0$ or is isomorphic with some $\Gamma(G,u)$  is called
an {\it interval effect algebra}.

Let $u$ be a positive element of an Abelian po-group $G.$ The
element $u$ is said to be {\it generative} if every element $g \in
G^+$ is a group sum of finitely many elements of $\Gamma(G,u),$ and
$G = G^+-G^+.$ Such an element is  a strong unit \cite[Lem
1.4.6]{DvPu} for $G$ and the couple $(G,u)$ is said to be a {\it
po-group with generative strong unit}.  For example, if $u$ is a
strong unit of an interpolation po-group $G,$ then $u$ is
generative.  The same is true for $I$ and $\mathcal E(H):= \Gamma({\mathcal
B}(H),I).$

Let $E$ be an effect algebra and $H$ be an Abelian (po-) group. A mapping
$p: E \to H$ that preserves $+$ is called an $H$-{\it valued
measure} on $E.$

\begin{Remark}\label{re:2.1} {\rm
If $E$ is an interval effect algebra, then there is a po-group $G$
with a generative strong unit $u$ such that $E \cong \Gamma(G,u),$
and every $H$-valued measure $p:\Gamma(G,u) \to H$ can be extended
to a group-homomorphism $\phi$ from $G$ into $H.$ If $H$ is a
po-group, then $\phi$ is a po-group-homomorphism. Then $\phi$ is
unique and $(G,u)$ is also unique up to isomorphism of unital (po-)
groups, see \cite[Cor 1.4.21]{DvPu}; the element $u$ is said to be a
{\it universal strong unit} for $\Gamma(G,u)$ and the couple $(G,u)$
is said to be a {\it unigroup}.}
\end{Remark}

We recall that an effect algebra  $E$ satisfies the {\it Riesz
Decomposition Property} ((RDP) in abbreviation) if $x_1 + x_2 = y_1
+ y_2$ implies there exist four elements $c_{11}, c_{12}, c_{21},
c_{22} \in E$ such that $x_1 = c_{11} + c_{12},$ $x_2 = c_{21} +
c_{22},$ $y_1 = c_{11} + c_{21},$ and $y_2 = c_{12} + c_{22}.$
Equivalently, \cite[Lem 1.7.5]{DvPu}, $E$ has (RDP) iff $x \le y_1 +
y_2$ implies that there exist two elements $x_1, x_2 \in E$ with
$x_1 \le y_1 $ and $x_2 \le y_2$ such that $x = x_1 + x_2$.

We say that an Abelian po-group $G$ is an {\it interpolation group},
if given $x_1,x_2,y_1,y_2$ in $G$ such that $x_i\leq y_j$ for all
$i,j$, there exists $z$ in $G$ such that $x_i\leq z\leq y_j$ for all
$i,j$. Equivalently, \cite[Prop 2.1]{Goo}, $G$ is an interpolation
group iff an analogous property as (RDP) for effect algebras holds
also for $G^+=\{g\in G: g\ge 0\}.$

\begin{Remark}\label{re:2.2}
{\rm (1) If $E$ is an effect algebra satisfying (RDP),  then $E$ is
an interval effect algebra. In such a case, there is a unique (up to
isomorphism of unital po-groups) interpolation unital po-group
$(G,u)$ such that $E \cong \Gamma(G,u).$ Moreover, $u$ is a
universal strong unit for $E.$ Conversely, if $(G,u)$ is an
interpolation unital po-group, then $\Gamma(G,u)$ satisfies (RDP),
and $u$ is a universal strong unit for $\Gamma(G,u),$ see \cite{Rav}
(\cite[Thm 1.7.17]{DvPu}).

(2) We note that the identity operator $I$ on a Hilbert space $H$ is
a universal strong unit for $\Gamma({\mathcal B}(H),I),$ \cite[Cor
1.4.25]{DvPu} that does not satisfy (RDP).

 }
\end{Remark}

We recall that an {\it MV-algebra} is an algebra $(A;\oplus,^*,0)$
of signature $\langle 2,1,0\rangle,$ where $(A;\oplus,0)$ is a
commutative monoid with neutral element $0$, and for all $x,y \in A$
\begin{enumerate}
\item[(i)]  $(x^*)^*=x,$
\item[(ii)] $x\oplus 1 = 1,$ where $1=0^*,$
\item[(iii)] $x\oplus (x\oplus y^*)^* = y\oplus (y\oplus x^*)^*.$
\end{enumerate}

We define also two  additional total operations $\odot$ and
$\ominus$ on $A$ via $x\odot y:= (x^*\oplus y^*)^*$ and $x\ominus y
= x\odot y^*.$

If $(G,u)$ is an Abelian $\ell$-group (= lattice ordered group) with
a strong unit $u\ge 0,$  then $(\Gamma(G,u);\oplus,^*,0)$ is a
prototypical example of an MV-algebra, where $\Gamma$ is the Mundici
functor, where $\Gamma(G,u):= [0,u]$ is endowed with the
MV-operations $g_1\oplus g_2 := (g_1 +g_2)\wedge u,$ $g^* := u-g,$
because by \cite{Mun}, every MV-algebra is isomorphic to some
$\Gamma(G,u).$

If on an MV-algebra $A$ we define a partial operation, $+$, by $a+b$
is defined in $A$ iff $a\le b^*,$ and we set then $a\oplus b:=
a\oplus b.$ Then $(A;+,0,1)$ is an interval effect algebra with
(RDP).

An {\it ideal} of an effect algebra $E$ is a non-empty subset $I$ of
$E$ such that (i) $x \in E$, $y \in I$, $x\le y$ imply $x \in I$,
and (ii) if $x,y \in I$ and $x+y$ is defined in $E$, then $x+y \in
I$.  We denote by ${\mathcal I}(E)$ the set of all ideals of $E.$ An
ideal $I$ is said to be a {\it Riesz ideal} if, for $x \in I$,
$a,b \in E$ and $x \le a+b$, there exist $a_1,b_1 \in I$ such that
$x = a_1 +b_1$ and $a_1 \le a$ and $b_1 \le b$.

For example, if $E$ has   (RDP), then any ideal of $E$ is Riesz.

We say that a poset $E$ is an {\it antilattice} if  joins and meets
exist only for comparable elements.  For example, every linearly
ordered set is an antilattice. According to \cite[Thm 2.12]{Rav} or
\cite[Thm 7.2]{Dvu2}, every effect algebra with (RDP) is a
subdirect product of antilattice effect algebras with (RDP).

For example, if $H$ is a Hilbert space, then ${\mathcal B}(H)$ is an
antilattice \cite{LuZa}, but $\mathcal E(H)=\Gamma({\mathcal B}(H),I)$ is
not. In fact, if $P_M$ and $P_N$ are orthogonal projectors onto
subspaces $M$ and $N$ of $H$, then $P_M\vee P_N$ exists in $\mathcal E(H)$
and equals $P_{M\vee N},$ \cite{Dvu0}, where $M\vee N$ denotes the join
in the complete lattice of all closed subspaces of $H,$ whereas
their join in ${\mathcal B}(H)$ fails when  they are not
comparable.

If $E=\Gamma(G,u)$ for some effect algebra with (RDP), then all
joins and meets from $E$ are the same also in $G.$

We recall that if $(G,u)$ is an Abelian unital po-group, then a {\it
state} on it is any mapping $s:G\to \mathbb R$ such that (i)
$s(g)\ge 0$ for any $g\ge 0,$ (ii) $s(g_1+g_2)=s(g_1)+(g_2)$ for all
$g_1,g_2 \in G,$ and (iii) $s(u)=1.$ A state $s$ is {\it extremal}
if from $s = \lambda s_1 +(1-\lambda)s_2$ for $\lambda \in (0,1)$ it
follows $s=s_1=s_2.$ We denote by $\mathcal{S}(G,u)$ and by
$\partial_e \mathcal S(G,u)$ the sets of all states and all extremal
states on $(G,u).$ We can also introduce the weak topology on
$\mathcal S(G,u).$  We have  that $\mathcal{S}(G,u)$ is always
nonempty, \cite[Cor 4.4]{Goo}, whenever $u>0.$ Due to the
Krein--Mil'man Theorem, \cite[Thm 5.17]{Goo}, every state on $(G,u)$
is a weak limit of  a net of convex combinations of extremal states
on $(G,u).$ If we set $\Gamma(G,u)= [0,u],$ then the restriction of
any state on $(G,u)$ onto $\Gamma(G,u)$ is a state on $\Gamma(G,u).$
We recall that if $u$ is a strong unit and $G$ is an interpolation
group,   in particular, an $\ell$-group, or more general a unigroup,
then every state on $\Gamma(G,u)$ can be uniquely extended to a
state on $(G,u).$  Moreover, this correspondence is an affine
homeomorphism (affine means that it preserves all convex
combinations).

We say that a po-group $G$ is {\it Archimedean} if for $x,y \in G$
such that $nx \le y$ for all positive integers $n \ge 1$, then $x
\le 0.$

\begin{Remark}\label{re:2.3}{\rm
It is possible to show that a unital group $(G,u)$  is
Archimedean iff $G^+=\{g \in G:\ s(g) \ge 0$ for all $ s \in
{\mathcal S}(G,u)\},$  \cite[Thm 4.14]{Goo}, or equivalently,
$\Gamma(G,u)$ has an {\it order determining} system of states,
$\mathcal S,$ i.e., $f\le g$ iff $s(f)\le s(g)$ for any $s\in
\mathcal S.$ In a similar way we define an order determining system
of states on an effect algebra $E.$  If $E=\Gamma(G,u)$ and $(G,u)$
is a unigroup, then $E$ has an order determining system of state iff
$(G,u)$ has it.  In particular, $\partial_e \mathcal S(E)$ is order
determining iff so is $\mathcal S(E).$}
\end{Remark}

An analogous result holds also for some effect algebras, see
Proposition \ref{pr:4.2} below.

\section{State-Operators and $n$-State-Operators on Effect Algebras}

In this section we introduce state-operators, $n$-state-operators,
strong state-operators, and state-morphism-operators on effect
algebras and we present their basic properties. We show that in the
case of MV-algebras, a strong state-operator and a
state-morphism-operator coincide with state-operators and
state-morphism-operators defined for MV-algebras, Propositions
\ref{pr:3.5}--\ref{pr:3.6}.

Let $E$ and $F$ be two effect algebras. A mapping $h:\ E \to F$ is
said to be a {\it homomorphism} if (i) $h(a+b) = h(a) + h(b)$
whenever $a+b$ is defined in $E$, and (ii) $h(1) =1$. In particular,
we have $h(a')=h(a)'$ for each $a\in E,$  $h(0)=0,$ $h(a)\le h(b)$
whenever $a\le b$ and then $h(b-a)=h(b)-h(a).$ A bijective
homomorphism $h$ such that $h^{-1}$ is a homomorphism is said to be
an {\it isomorphism} of $E$ and $F$.

Let $E$ be an effect algebra. An endomorphism $\tau: E \to E$ such
that  $\tau^2 =\tau$ is said to be a {\it state-operator} or an {\it
internal state} and the couple $(E,\tau)$ is said to be a {\it state
effect algebra} with internal state.

An endomorphism $\tau: E \to E$ such that
$$
\tau(\tau(a)\vee \tau(b))= \tau(a)\vee \tau(b) \eqno(3.1)
$$
whenever $\tau(a)\vee \tau(b)$ is defined in $E$ is said to be a
{\it strong state-operator} on $E,$ and the couple $(E,\tau)$ is
called a {\it strong state effect algebra}.

From (3.1) we see that, for any $a \in E,$
$\tau^2(a)=\tau(\tau(a)\vee \tau(a))= \tau(a)\vee \tau(a) =\tau(a),$
i.e., any strong state-operator is a state-operator on $E.$

If a strong-state-operator $\tau$ satisfies also  $\tau (a\vee
b)=\tau(a)\vee \tau(b)$ whenever $a\vee b$ is defined in $E,$ $\tau$
it is called a {\it state-morphism-operator} on $E,$ and the couple
$(E,\tau)$ is said to be  a {\it state-morphism effect algebra.} An
endomorphism $\tau$ is a state-morphism-operator iff $\tau^2=\tau$
and $\tau$ preserves all existing joins in $E.$

Finally, we generalize the just defined notions as follows. Given an
integer $n\ge 1,$ an endomorphism $\tau:E \to E$ is said to be an
$n$-{\it state-operator} if $\tau$ is $n$-potent, i.e. $\tau^n =
\tau.$ The couple $(E,\tau)$ is said to be an $n$-{\it state
effect algebra}. An $n$-state-operator $\tau$ is said to be an
$n$-state-morphism-operator if $\tau$ preserves all existing joins
in $E,$ and the couple $(E,\tau)$ is said to be an $n$-{\it
state-morphism effect algebra}.

We recall that a state $s$ on $E$ is {\it discrete} if there is an
integer $n\ge 1$ such that $s(E)\subseteq \{0,1/n,\ldots, n/n\}.$

We have that if $s \in \mathcal S(E),$ then $s\circ \tau \in
\mathcal S(E),$ and if $s$ is discrete, so is $s\circ \tau.$

We say that a state-operator $\tau$ satisfies the {\it extremal
state property}, (ESP) for short, if  $s\circ \tau \in
\partial_e \mathcal S(E),$ for any $s\in \partial_e \mathcal S(E),$
and we say also that $(E,\tau)$ satisfies (ESP).

Let $\tau$ be an endomorphism on $E.$ We denote by
$$\Ker(\tau)=\{a\in E: \tau(a)=0\}$$
the {\it kernel} of $\tau.$ An endomorphism $\tau$ is {\it faithful}
if $\Ker(\tau)=\{0\}.$  An ideal $I$ of $E$ is said to be a
$\tau${\it -ideal} if $\tau(I) \subseteq I.$  For example,
$\Ker(\tau)$ is a $\tau$-ideal.

\begin{Example}\label{ex:3.1} {\rm
(1) The couple $(E,\id_E)$ is a state-morphism effect algebra with
(ESP).

(2) Let $F$ be an effect algebra and let $E = F \times F.$ We define
two operators on $E$ by
$$ \tau_1(a,b) =(a,a), \quad \mbox{and}\quad \tau_2(a,b)=
(b,b),\quad (a,b) \in F\times F.
$$
Then $\tau_1$ and $\tau_2$ are state-morphism-operators on $E$ that
preserve all joins and meets existing in $E$:

In fact, if $\mathcal \partial_e \mathcal S(F)=\emptyset,$ then $\partial_e
\mathcal S(E)=\emptyset$ and both $\tau_1$ and $\tau_2$ trivially satisfy
(ESP).

Assume that $\partial_e \mathcal S(F)=\{s_t: t \in T\}$ for some
index set $T\ne \emptyset.$  Define $m_t^1(a,b)=s_t(a)$ and $m_t^2(a,b)=s_t(b)$
for all $(a,b) \in F \times F$ and each $t\in T.$ If $m$ is an
extremal state on $E,$ then either $m(1,0)=1$ or $m(0,1)=1.$ In the
first case, $s(a):= m(a,0)$ is a state on $F.$ It is extremal,
otherwise, $s(a)= \lambda_1 s_1(a)+\lambda_2 s_2(a)$ for some states
$s_1,s_2$ on $E.$  If we define $m_i(a,b):= s_i(a),$ then $m_1,m_2$
are states on $E$ and $m(a,b)= \lambda_1 m_1(a,b)+\lambda_2m_2(a,b)$
which is impossible, hence $s(a)$ is an extremal state on $F$ so
that $s=s_t$ for some $t\in T,$ and $m(a,b)= m(a) =s_t(a) =
m_t^1(a,b).$

Similarly, in the second case, $m= m_t^2$ for some $t\in T.$
Consequently, $\partial_e \mathcal S(E)=\{m_t^j: t\in T, j=1,2\}.$

Check:  $m_t^1(\tau_1(a,b)) = m_t^1(a,a)= s_t(a)=m_t^1(a,b)$ and
$m_t^2(\tau_1(a,b)) = m_t^2(a,a) = s_t(a)=m_t^1(a,b).$  Therefore,
$\tau_1$  as well as $\tau_2$ are state-morphism-operators with
(ESP). }
\end{Example}

\begin{Lemma}\label{le:3.2}
Let $\tau$ be an  endomorphism of an effect algebra $E$ such that
$\tau^2 = \tau.$ Then

\begin{enumerate}

\item[{\rm (i)}] If  $\tau$ is a strong state-operator, then $\tau$
preserves all existing meets from $E$  of the form $\tau(a)\wedge
\tau(b).$

\item[{\rm (ii)}] The set $\tau(E)$ is an effect subalgebra, $\tau(E) =
\{a\in E: \tau(a)=a\},$ and $\tau$ on $\tau(E)$ is the identity on
$\tau(E).$ If $\tau$ is also a strong state-operator, then if
$\tau(a)\vee \tau(b) \in E,$ then $\tau(a)\vee \tau(b)\in \tau(E).$

\item[{\rm (iii)}]  If $E$ satisfies {\rm (RDP)}, then so does $\tau(E).$

\item[{\rm (iv)}]  If $\tau$ is faithful, then $a< b$ entails
$\tau(a)<\tau(b).$

\item[{\rm (v)}] If $\tau$ is faithful, then either $\tau(a)=a$ or
$\tau(a)$ and $a$ are not comparable.

\item[{\rm (vi)}]  If $E$ is linear and $\tau$ faithful, then
$\tau(a) = a$ for any $a \in E.$

\item[{\rm (vii)}] If $E$ is an antilattice effect algebra, then $\tau$
preserves all existing meets and joins.

\item[{\rm (viii)}]
If $\tau:E\to E$ is  faithful  then $\tau$ is a strong
state-operator.
\end{enumerate}

\end{Lemma}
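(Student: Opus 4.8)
The plan is to prove (ii) first as the structural backbone, then derive the remaining parts, several of which cascade from a single strict-monotonicity observation. For (ii), I would begin with the fixed-point description: if $a=\tau(b)$ then $\tau(a)=\tau^2(b)=\tau(b)=a$, and conversely $\tau(a)=a$ places $a$ in $\tau(E)$; hence $\tau(E)=\{a\in E:\tau(a)=a\}$ and $\tau$ restricts to the identity there. That $\tau(E)$ is an effect subalgebra is then routine from the homomorphism properties recorded just before the lemma: $\tau(0)=0$ and $\tau(1)=1$ give $0,1\in\tau(E)$, $\tau(a')=\tau(a)'$ gives closure under $'$, and $\tau(a+b)=\tau(a)+\tau(b)$ gives closure under the partial sum whenever it is defined. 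The last clause of (ii) is immediate from the definition of a strong state-operator, since $\tau(\tau(a)\vee\tau(b))=\tau(a)\vee\tau(b)$ exhibits the join as a fixed point of $\tau$, i.e.\ as an element of $\tau(E)$.

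For (i) I would exploit the De Morgan duality supplied by complementation. Recall that $a\mapsto a'$ is an order-reversing involution on $E$, so it interchanges existing binary meets and joins: $(x\wedge y)'=x'\vee y'$ and $(x\vee y)'=x'\wedge y'$, one side existing exactly when the other does. Assuming $\tau(a)\wedge\tau(b)$ exists, its complement is $\tau(a)'\vee\tau(b)'=\tau(a')\vee\tau(b')$, which therefore exists; being of the form $\tau(\cdot)\vee\tau(\cdot)$, it is fixed by $\tau$ because $\tau$ is a strong state-operator. Applying $\tau$ to $(\tau(a)\wedge\tau(b))'$ and using that $\tau$ commutes with $'$, I would obtain $\tau(\tau(a)\wedge\tau(b))'=(\tau(a)\wedge\tau(b))'$, then cancel the injective complementation to get $\tau(\tau(a)\wedge\tau(b))=\tau(a)\wedge\tau(b)$; since $\tau^2=\tau$, this says precisely that $\tau$ preserves this meet.

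For (iii) I would use the interpolation form of (RDP) from \cite[Lem 1.7.5]{DvPu}: given $x\le y_1+y_2$ with $x,y_1,y_2\in\tau(E)$, split $x=x_1+x_2$ in $E$ with $x_i\le y_i$, then apply $\tau$ to replace each $x_i$ by $\tau(x_i)\in\tau(E)$, which still satisfies $\tau(x_i)\le\tau(y_i)=y_i$ and sums to $\tau(x)=x$. Parts (iv)--(viii) then hinge on one fact: for $a<b$ write $b=a+c$ with $c=b-a\ne 0$; faithfulness forces $\tau(c)\ne 0$, so $\tau(a)<\tau(a)+\tau(c)=\tau(b)$, which is (iv). Part (v) follows because $a<\tau(a)$ or $\tau(a)<a$ would, via (iv) together with $\tau(\tau(a))=\tau(a)$, yield the contradiction $\tau(a)<\tau(a)$; thus comparability of $a$ and $\tau(a)$ forces $\tau(a)=a$. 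Then (vi) is immediate, since in a linearly ordered $E$ every $a$ is comparable to $\tau(a)$. For (viii), if $c=\tau(a)\vee\tau(b)$ exists then $\tau(c)$ dominates both $\tau(a)=\tau(\tau(a))$ and $\tau(b)$, so $c\le\tau(c)$; comparability and (v) give $\tau(c)=c$, i.e.\ $\tau$ is a strong state-operator. Finally (vii) uses the antilattice hypothesis directly: an existing $a\vee b$ forces $a,b$ comparable, hence $\tau(a),\tau(b)$ comparable with $\tau(a\vee b)=\tau(a)\vee\tau(b)$, and dually for meets.

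I expect the main obstacle to be (i) rather than the later parts, which are essentially bookkeeping with homomorphism properties and the single strict-monotonicity lemma (iv). The only genuinely non-formal input in (i) is the interchange of meets and joins under complementation, and the one point demanding care is that existence of $\tau(a)\wedge\tau(b)$ transfers correctly to existence of $\tau(a')\vee\tau(b')$ across this duality before the strong state-operator hypothesis can be applied.
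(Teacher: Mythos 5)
Your proposal is correct and follows essentially the same route as the paper: the fixed-point description of $\tau(E)$, De Morgan duality for (i), strict monotonicity via $\tau(b-a)=0\Rightarrow b-a=0$ for (iv)--(vi), comparability in antilattices for (vii), and showing $\tau(a)\vee\tau(b)$ is a fixed point for (viii). The only cosmetic differences are that you use the interpolation form of (RDP) in (iii) where the paper uses the equivalent four-element refinement form, and in (viii) you route through (v) where the paper applies the kernel argument directly; both rest on the same mechanism.
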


\begin{proof}
(i)    Passing to negation, we see that $\tau$ preserves all
existing meets in $\tau(a)\wedge \tau(b) \in E.$

(ii) Since $a \in \tau(E)$ iff $a = \tau(b)$ for some $b \in E,$ we
have $\tau(a)=\tau(\tau(b))= \tau(b) = a.$

Let $\tau(a)+\tau(b)$ be defined in $E.$  Then
$\tau(\tau(a)+\tau(b))= \tau(\tau(a))+ \tau(\tau(b)) =
\tau(a)+\tau(b) \in \tau(E).$ It is clear now that the restriction
of $\tau$ onto $\tau(E)$ is the identity on $\tau(E).$

Now if $\tau$ is a strong state-operator, the statement follows from
(3.1).

(iii)  Let $E$ satisfy (RDP) and let $\tau(a_1)+\tau(a_2)=
\tau(b_1)+\tau(b_2).$  There are four elements $c_{11}, c_{12},
c_{21}, c_{22} \in E$ such that $\tau(a_1)= c_{11} + c_{12},$
$\tau(a_2) = c_{21} + c_{22},$ $\tau(b_1)= c_{11} +c_{21}$ and
$\tau(b_2)= c_{12} + c_{22}.$ Then $\tau(a_1)= \tau(\tau(a_1)) =
\tau(c_{11} + c_{12}) = \tau(c_{11}) + \tau(c_{12}),$ similarly for
$\tau(a_2), \tau(b_1), \tau(b_2)$ proving that the elements
$\tau(c_{11}),\tau(c_{12}), \tau(c_{21}), \tau(c_{22}) \in \tau(E)$
yield that $\tau(E)$ satisfies (RDP).

(iv)  Suppose that $a<b$ and $\tau(a)=\tau(b).$  Then $\tau(b-a) =
\tau(b)-\tau(a)= \tau(a)-\tau(a)=0$ giving $b-a=0$ so that $a=b,$ a
contradiction.

(v)  Assume that $\tau(a)\ne a$ and let $\tau(a)$ and $a$ be
comparable.  Then either $a< \tau(a)$ or $\tau(a) <a.$  By (iv) we
have $\tau(a) < \tau(a)$ that is impossible.

(vi) It follows directly from (v).

(vii)  Let $a\vee b$ be defined in $E.$  Then $a$ and $b$ are
comparable. So are $\tau(a)$ and $\tau(b)$ so that $\tau(a\vee b)=
\tau(a)\vee \tau(b).$ Going to negations, we see that $\tau$
preserves also meets.

(viii)   Assume that $d=\tau(a)\vee \tau(b)$ is defined in $E.$ We
show that $\tau(d)=d.$ Check, $\tau(d)\ge \tau(\tau(a))= \tau(a)$
and $\tau(d)\ge \tau(\tau(b))= \tau(b).$ This yields $\tau(d)\ge
\tau(a)\vee \tau(b)=d.$ Hence, we have
$\tau(\tau(d)-d)=\tau(\tau(d))-\tau(d)=0, $ i.e. $\tau(d)-d=0$ and
$\tau(d)=d.$
\end{proof}

\begin{Proposition}\label{pr:3.3}
Let $E=\Gamma(G,u)$ be an interval effect algebra, where $(G,u)$ is
a unital po-group with universal strong unit for $E.$ Let $n$ be a
fixed integer.

Every endomorphism $\tau $ with $\tau^n=\tau$ on $E$ can be uniquely
extended to an $n$-potent po-group homomorphism $\tau _u$ on
$(G,u),$ i.e. $\tau_u^n = \tau_u.$ Conversely, the restriction of
any $n$-potent po-group homomorphism of $(G,u)$ to $E$ gives an
$n$-potent endomorphism on $E.$

\end{Proposition}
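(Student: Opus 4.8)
The plan is to use Remark~\ref{re:2.1}, which guarantees that any $H$-valued measure on $\Gamma(G,u)$ extends uniquely to a po-group homomorphism from $G$ into $H$. Here I would apply this with $H=G$ and with the $H$-valued measure being $\tau$ itself, viewed as a measure $\tau:E\to G$ (it preserves $+$ since $\tau$ is an endomorphism). This immediately produces a unique po-group homomorphism $\tau_u:G\to G$ whose restriction to $E=[0,u]$ equals $\tau$. So the existence and uniqueness of the extension as a po-group homomorphism are essentially free from the universal-property machinery already set up in the excerpt; the only genuinely new content is that this extension inherits the relation $\tau_u^n=\tau_u$.

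To establish $n$-potency of the extension, I would argue that both $\tau_u^{\,n}$ and $\tau_u$ are po-group homomorphisms $G\to G$ that agree on $E=[0,u]$, and then invoke uniqueness of the extension. Concretely, $\tau_u^{\,n}=\tau_u\circ\cdots\circ\tau_u$ is a composite of po-group homomorphisms, hence a po-group homomorphism. On $E$ it restricts to $\tau^n=\tau$ (using that $\tau$ maps $E$ into $E$, so the iterated extensions agree with the iterated $\tau$'s on $E$). Since $\tau_u$ also restricts to $\tau$ on $E$, both $\tau_u^{\,n}$ and $\tau_u$ are extensions of the same measure $\tau:E\to G$, and by the uniqueness clause of Remark~\ref{re:2.1} they coincide on all of $G$. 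Hence $\tau_u^{\,n}=\tau_u$.

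The converse direction is the easy one: if $\sigma:G\to G$ is any $n$-potent po-group homomorphism, then its restriction $\sigma|_E$ maps $E=[0,u]$ into $[0,\sigma(u)]$; to land back in $E$ I need $\sigma(u)=u$, so I would include (or note) that the relevant homomorphisms fix $u$ — indeed $\tau_u(u)=\tau(u)$, and for the extension produced above $\tau(u)\in E$, so one checks $\sigma$ preserves the unit when it is a \emph{unital} po-group homomorphism. Granting $\sigma(u)=u$, the restriction $\sigma|_E$ is an endomorphism of $E$, and $(\sigma|_E)^n=\sigma^n|_E=\sigma|_E$, so it is an $n$-potent endomorphism.

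The main obstacle I anticipate is a bookkeeping subtlety rather than a deep one: I must make sure the iterated map $\tau_u^{\,n}$ really restricts to $\tau^n$ on $E$. This requires checking that each application of $\tau_u$ to an element of $E$ lands in $E$ (so that the next application coincides with $\tau$, not merely with $\tau_u$ on some larger domain). This is exactly the content that $\tau(E)\subseteq E$, guaranteed because $\tau$ is an endomorphism of $E$; so after confirming $\tau_u|_E=\tau$ maps $E$ into $E$, an easy induction gives $\tau_u^{\,k}|_E=\tau^k$ for all $k\le n$, closing the argument. The uniqueness of $\tau_u$ and of $(G,u)$ up to unital po-group isomorphism, already recorded in Remark~\ref{re:2.1}, carries the rest.
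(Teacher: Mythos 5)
Your proposal is correct, and its skeleton (extend $\tau$ via the universal property of the unigroup $(G,u)$ recorded in Remark~\ref{re:2.1}, then verify $n$-potency of the extension) matches the paper's. The one place where you genuinely diverge is the verification that $\tau_u^n=\tau_u$: the paper computes directly, writing each $x\in G^+$ as a finite sum $x=x_1+\cdots+x_k$ with $x_i\in E$ (possible because $u$ is a generative strong unit), pushing $\tau_u^n$ through the sum, using $\tau_u^n(x_i)=\tau^n(x_i)=\tau(x_i)$ on each summand, and then handling general $x=x_1-x_2$ with $x_1,x_2\in G^+$; you instead observe that $\tau_u^n$ and $\tau_u$ are both group homomorphisms of $G$ restricting to $\tau$ on $E$ and invoke the uniqueness clause of Remark~\ref{re:2.1}. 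The two arguments are equivalent in substance --- the uniqueness clause is itself proved via the generative decomposition --- but yours is cleaner in that it treats the universal property as a black box rather than re-running the computation, and you make explicit the induction $\tau_u^k|_E=\tau^k$ (relying on $\tau(E)\subseteq E$) that the paper leaves implicit. You also spell out the converse direction, including the point that one needs $\sigma(u)=u$ (i.e., that $\sigma$ is a \emph{unital} po-group homomorphism) for the restriction to be an endomorphism of $E$; the paper states the converse but does not argue it.
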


\begin{proof}   Let $\tau $ be an $n$-potent  endomorphism on $E$. Since
$E=\Gamma(G,u),$  the mapping $\tau: E\to E$ is in fact a $G$-valued
measure on $E.$ By Remark \ref{re:2.1}, there is a unique extension,
$\tau_u:G \to G$ of $\tau$ that is a po-group-homomorphism.

Now we show that $\tau_u^n =\tau_u.$  Since every element $x \in
G^+$ is expressible via $x = x_1+\cdots+x_n,$ where
$x_1,\ldots,x_n\in E,$ we have $\tau_u^n(x)= \tau_u^n(x_1)+\cdots+
\tau_u^n(x_n)= \tau^n(x_1)+\cdots+ \tau^n(x_n) = \tau(x_1)+\cdots+
\tau(x_n) = \tau_u(x_1)+\cdots+ \tau_u(x_n).$ Every element $x \in
G$ is of the form $x = x_1- x_2,$ where $x_1,x_2\in G^+,$ then
$\tau_u^n(x)= \tau_u^n(x_1)-\tau_u^n(x_2)= \tau_u(x_1)-\tau_u(x_2)=
\tau_u(x).$
\end{proof}

\begin{Proposition}\label{pr:3.4} If $E$ is a linear effect
algebra with {\rm (RDP)}, then every endomorphism on $E$ preserves
joins and if $s$ is an extremal state, then $s \circ \tau$ is an
extremal state on $E.$
\end{Proposition}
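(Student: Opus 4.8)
The plan is to treat the two assertions separately: the first is elementary, and the second reduces to the fact that a linearly ordered effect algebra with (RDP) carries only one state.

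For the preservation of joins, note that linearity of $E$ means any two elements are comparable, so $a\vee b$ always exists and equals the larger of $a,b$. If $a\le b$, then $\tau(a)\le\tau(b)$, since every endomorphism is monotone, whence $\tau(a\vee b)=\tau(b)=\tau(a)\vee\tau(b)$. This is the argument of Lemma \ref{le:3.2}(vii) specialized to a chain, and it needs neither (RDP) nor idempotency of $\tau$; the same computation on negations shows $\tau$ also preserves meets.

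For the statement about extremal states, I would first show that $E$ admits at most one state; the assertion then follows at once, because $s\circ\tau$ is again a state (as already observed in the text), so $s\circ\tau=s$ whenever a state $s$ exists, and the unique point of a one-point state space is trivially extreme. To prove uniqueness I would pass to the group picture: by Remark \ref{re:2.2} we may write $E\cong\Gamma(G,u)$ with $(G,u)$ an interpolation unital po-group and $u$ a universal (hence generative) strong unit. The key step is to propagate linearity from the unit interval to all of $G$, i.e. to prove that $G$ is totally ordered. I would do this by induction on $m$, showing that each interval $[0,mu]$ is a chain: given $x,y\in[0,(m+1)u]$, (RDP) lets me write $x=x_0+x_1$ and $y=y_0+y_1$ with $x_0,y_0\in[0,mu]$ and $x_1,y_1\in[0,u]$. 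The two aligned cases give $x\le y$ or $y\le x$ directly, while in the mixed case the difference $x-y$ equals $d-e$ with $d=x_1-y_1\in[0,u]\subseteq[0,mu]$ and $e=y_0-x_0\in[0,mu]$, so the induction hypothesis makes $d,e$ comparable and hence $x,y$ comparable. Since $u$ is a strong unit, any two elements of $G$, after adding a suitable multiple $Nu$, lie in some $[0,2Nu]$ and are therefore comparable; thus $G$ is linearly ordered.

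Finally, a totally ordered abelian group with order unit carries a unique state: any state $s$ must satisfy $s(g)\ge p/q$ whenever $pu\le qg$ and $s(g)\le p/q$ whenever $qg\le pu$, and totality guarantees that for every rational $p/q$ one of these two inequalities holds, so these bounds pin down $s(g)$ as a single real number, finite because $u$ is an order unit. Restricting to $E$ yields the desired uniqueness, completing the proof. I expect the induction showing that each $[0,mu]$ is a chain to be the main obstacle; the generativity of $u$ is essential there, since without it linearity of $[0,u]$ need not propagate to $G$ (one can build an interpolation group with a two-element unit interval that is not totally ordered once $u$ fails to be generative).
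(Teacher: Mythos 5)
Your proof is correct, but it takes a genuinely different route from the paper's. The paper stays with extremal states: it passes to the enveloping unital $\ell$-group $(G,u)$ and invokes the characterization of extremal states as those satisfying $s(g\wedge h)=\min\{s(g),s(h)\}$ (\cite[Thm 12.18]{Goo}); since linearity forces every endomorphism to preserve meets, $s\circ\tau(a\wedge b)=\min\{s(\tau(a)),s(\tau(b))\}$, so $s\circ\tau$ is again extremal. You instead prove the stronger fact that a nontrivial linear effect algebra with (RDP) carries exactly one state: your induction showing each $[0,mu]$ is a chain is sound (the Riesz decomposition in $G^{+}$ together with $[0,u]\subseteq[0,mu]$ handles the mixed case, and the strong-unit translation finishes the totality of $G$), and the Dedekind-cut argument pinning down $s(g)$ on a totally ordered unital group is the classical one. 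With a one-point state space the extremality claim becomes vacuous, since $s\circ\tau=s$. Your argument is longer but more self-contained -- it avoids the extremal-state characterization entirely, yields more (uniqueness of the state), and in passing justifies the paper's unproved assertion that $(G,u)$ is an $\ell$-group (indeed totally ordered). The first half of your proof, on preservation of joins, coincides with the paper's.
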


\begin{proof}  Suppose that $E =\Gamma(G,u)$ for some unital po-group $(G,u).$
Then $(G,u)$ is a unital $\ell$-group and a state $s$ on $E$ is
extremal iff its extension on $(G,u)$, denoted also by $s,$ is
extremal.  By \cite[Thm 12.18 ]{Goo}, $s$ is extremal iff $s(g\wedge
h)=\min\{s(g),s(h)\}$ for all $g,h \in G^+$ iff $s(g\wedge
h)=\min\{s(g),s(h)\}$ for all $g,h \in E.$

Since $E$ is linear, $\tau$ trivially preserves all joins and meets
in $E.$  Hence, if $s$ is an extremal state on $E,$ then
$s\circ\tau((a\wedge b)) = s(\tau(a)\wedge \tau(b))=
\min\{s(\tau(a)), s(\tau(b))\}$ proving that $s\circ \tau $ is an
extremal state on $E.$
\end{proof}

According to \cite{FlMo}, a \emph{state MV-algebra}  is a couple
$(A,\tau),$ where $A$ is an MV-algebra   and $\tau$ is a unary
operator on $A$ (an internal state or an {\it MV-state-operator})
satisfying for each $x,y\in A$:\\
\\
$(1)_{MV}$\ \ $\tau(0)=0,$\\
$(2)_{MV}$\ \ $\tau(x^{*})=(\tau(x))^{*},$\\
$(3)_{MV}$\ \ $\tau(x\oplus y)=\tau(x)\oplus\tau(y\odot(x\odot
y)^{*}),$\\
$(4)_{MV}$\ \ $\tau(\tau(x)\oplus \tau(y))=\tau(x)\oplus\tau(y).$\\

In \cite{FlMo} it is shown that for any state MV-algebra we have (i)
$\tau(\tau (x))=\tau (x)$, (ii) $\tau(1)=1$, (iii) if $x\leqslant
y$, then $ \tau(x)\leqslant\tau(y),$ (iv) $\tau(x\oplus y)\le
\tau(x)\oplus \tau(y),$ (v)  $\tau(x+y) = \tau(x)+\tau(y),$ (vi)
$\tau(\tau(x)\vee \tau(y))=\tau(x)\vee \tau(y),$ and (vii) the image
$\tau(A)$ is the domain of an MV-subalgebra of $A$ and
$(\tau(A),\tau)$ is a state MV-subalgebra of $(A,\tau).$

According to \cite{DiDv}, an {\it MV-state-morphism-operator} on an
MV-algebra $A$ is any endomorphism $\tau: A \to A$ such that $\tau^2
= \tau.$  Every MV-state-morphism-operator is a state-operator.

\begin{Proposition}\label{pr:3.5} Let $A$ be an MV-algebra.
If $\tau$ is an MV-state-operator  on the MV-algebra $A,$ then
$\tau$ is a strong state-operator on  $A$ taken as an effect algebra
with the partial addition $+$ derived from $\oplus.$

Conversely, if $\tau$ is a strong state-operator on the derived
effect algebra $A,$ then $\tau$ is an MV-state-operator on the
MV-algebra $A.$
\end{Proposition}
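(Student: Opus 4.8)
The statement is a biconditional relating two notions of "state-operator," so the plan is to prove each direction separately by checking the defining axioms of the target notion against those already assumed. Recall that the partial addition $+$ on $A$ is defined by: $a+b$ is defined iff $a \le b^*$, and then $a+b = a \oplus b$. Throughout I will freely use properties (i)--(vii) of MV-state-operators established in \cite{FlMo} and quoted in the excerpt, in particular that $\tau(\tau(x)\vee\tau(y))=\tau(x)\vee\tau(y)$ (property (vi)) and that $\tau$ is monotone (property (iii)).

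\emph{Forward direction.} Assume $\tau$ is an MV-state-operator. First I must verify that $\tau$ is an endomorphism of the \emph{effect algebra} $A$, i.e. that $\tau(1)=1$ and $\tau(a+b)=\tau(a)+\tau(b)$ whenever $a+b$ is defined. The condition $\tau(1)=1$ is property (ii). For additivity: if $a+b$ is defined in the effect algebra, then $a \le b^*$, so $a \odot b = 0$ and hence $b \odot (a \odot b)^* = b \odot 1 = b$; feeding this into axiom $(3)_{MV}$ collapses it to $\tau(a \oplus b) = \tau(a) \oplus \tau(b)$, which is exactly $\tau(a+b)=\tau(a)+\tau(b)$ (this is property (v), the additivity of $\tau$ on the derived partial operation). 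One should also confirm $\tau(a)+\tau(b)$ is defined, i.e. $\tau(a)\le\tau(b)^*=\tau(b^*)$, which follows from monotonicity applied to $a\le b^*$. Thus $\tau$ is an effect-algebra endomorphism. That $\tau$ is a strong state-operator is now immediate: condition (3.1), namely $\tau(\tau(a)\vee\tau(b))=\tau(a)\vee\tau(b)$ whenever the join is defined, is precisely property (vi).

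\emph{Converse direction.} Assume $\tau$ is a strong state-operator on the derived effect algebra $A$. Being a strong state-operator, $\tau$ is in particular an effect-algebra endomorphism with $\tau^2=\tau$ (as noted in the excerpt right after the definition of strong state-operator). I must recover axioms $(1)_{MV}$--$(4)_{MV}$. Axiom $(1)_{MV}$, $\tau(0)=0$, and axiom $(2)_{MV}$, $\tau(x^*)=\tau(x)^*$, both follow from the general properties of effect-algebra homomorphisms listed in Section 3 (namely $h(0)=0$ and $h(a')=h(a)'$). Axiom $(4)_{MV}$, $\tau(\tau(x)\oplus\tau(y))=\tau(x)\oplus\tau(y)$, is the place where the \emph{strong} condition (3.1) is needed: one rewrites $\oplus$ in terms of the partial $+$ and the lattice structure of the MV-algebra and uses $\tau(\tau(a)\vee\tau(b))=\tau(a)\vee\tau(b)$ together with $\tau$-idempotency to absorb the outer $\tau$.

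\emph{Main obstacle.} The delicate step is axiom $(3)_{MV}$ in the converse, since $a \oplus b$ with $a \not\le b^*$ is a \emph{total} MV-operation that is not directly visible to the partial effect-algebra structure. The plan is to decompose the truncated sum: writing $c = b \odot (a \odot b)^*$, one checks that $a + c$ is always defined in the effect algebra (i.e. $a \le c^*$) and that $a \oplus b = a + c$ as elements of $A$, a purely MV-algebraic identity. Applying effect-algebra additivity of $\tau$ then gives $\tau(a\oplus b)=\tau(a)+\tau(c)=\tau(a)\oplus\tau(c)$, which is the right-hand side of $(3)_{MV}$. Verifying the identity $a \oplus b = a + (b \odot (a \odot b)^*)$ and the definedness $a \le (b \odot (a \odot b)^*)^*$ are the routine-but-essential MV-algebra computations; once they are in hand, the matching of $(3)_{MV}$ follows by applying additivity of $\tau$ and re-expressing $+$ as $\oplus$. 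I expect this re-expression of the total MV-sum through an effect-algebra-definable sum to be the crux of the whole argument.
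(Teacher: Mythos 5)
Your proposal is correct and follows essentially the same route as the paper: the forward direction is read off from the quoted properties of MV-state-operators (in particular $\tau(1)=1$, additivity on the partial $+$, and property (vi), which is exactly condition (3.1)), and the converse hinges on the same decomposition $x\oplus y = x + \bigl((x\oplus y)\ominus x\bigr) = x + (y\wedge x^*) = x + \bigl(y\odot(x\odot y)^*\bigr)$ that you identify as the crux for $(3)_{MV}$. The only cosmetic difference is that for $(4)_{MV}$ the paper invokes condition (3.1) in its dual (meet) form, $\tau(\tau(y)\wedge \tau(x^*))=\tau(y)\wedge\tau(x^*)$, obtained by passing to negations, rather than the join form you cite.
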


\begin{proof}  Let $\tau$ be an MV-state-operator on the MV-algebra
$A.$  Due to the basic properties of $\tau$ described just before
this proposition, we see that $\tau$ is a strong state-operator on
the effect algebra $A$ derived from the MV-algebraic structure.
Moreover, $\tau$ preserves all joins and meets in $\tau(A).$

Conversely, let $\tau$ be a strong state-operator on the effect
algebra $A.$  Then $(1)_{MV}$ and $(2)_{MV}$ hold.

We have $x\oplus y = x + ((x\oplus y)\ominus x) = x + (y\wedge x^*)$
so that $\tau(x\oplus y) = \tau(x) + \tau(y \wedge x^*)=
\tau(x)\oplus\tau(y\odot(x\odot y)^{*})$ that gives $(3)_{MV}.$

In addition, $\tau(\tau(x)\oplus \tau(y)) = \tau(\tau(x) +
\tau(y)\wedge \tau(x)^*) = \tau(\tau(x) + \tau(y) \wedge \tau(x^*))
= \tau(\tau(x)) + \tau(\tau(y) \wedge \tau(x^*)) = \tau(x) + \tau(y)
\wedge \tau(x^*) = \tau(x) \oplus \tau(y)$ and this proves
$(4)_{MV}.$
\end{proof}

\begin{Proposition}\label{pr:3.6} Let $A$ be an MV-algebra.
If $\tau$ is an MV-state-morphism-operator  on the MV-algebra $A,$
then $\tau$ is a state-morphism-operator with {\rm (ESP)} on  $A$
taken as an effect algebra with the partial addition $+$ derived
from $\oplus.$

Conversely, if $\tau$ is a state-morphism-operator on the effect
algebra derived from an  MV-algebra $A,$ then $\tau$ is an
MV-state-morphism-operator on the MV-algebra $A.$ In addition,
$\tau$ is with {\rm (ESP)} on the effect algebra $A.$
\end{Proposition}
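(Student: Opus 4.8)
The plan is to prove the two implications separately, but to isolate one common ingredient on which both rest: the identification of the extremal states with the states that preserve meets. First I record the structural facts I will lean on. The effect algebra derived from $A$ carries exactly the MV-order, so it is a lattice in which every binary join and meet exists and agrees with the MV lattice operations $\vee,\wedge$; it has (RDP) and is of the form $\Gamma(G,u)$ for a unital $\ell$-group $(G,u)$. Consequently Goodearl's criterion (\cite[Thm 12.18]{Goo}), already used in the proof of Proposition \ref{pr:3.4}, applies: a state $s$ on $A$ is extremal if and only if $s(a\wedge b)=\min\{s(a),s(b)\}$ for all $a,b\in A$. The upshot is that, in both directions, (ESP) will reduce to the single fact that $\tau$ preserves meets: if it does, then for extremal $s$ we get $(s\circ\tau)(a\wedge b)=s(\tau(a)\wedge\tau(b))=\min\{(s\circ\tau)(a),(s\circ\tau)(b)\}$, so $s\circ\tau$ is again extremal.

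For the forward implication, suppose $\tau$ is an MV-state-morphism-operator, i.e. an MV-endomorphism with $\tau^2=\tau$. Since the lattice operations of an MV-algebra are term-definable, $\tau$ is automatically a lattice homomorphism, and because the partial addition $+$ is merely the restriction of $\oplus$, $\tau$ is an endomorphism of the derived effect algebra that fixes $0,1$ and commutes with $'$. As the effect-algebra joins coincide with the MV-joins, $\tau$ preserves all existing joins, and together with $\tau^2=\tau$ this is precisely the stated characterization of a state-morphism-operator. Meet-preservation then yields (ESP) exactly as above.

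For the converse, suppose $\tau$ is a state-morphism-operator on the derived effect algebra; by definition it is an endomorphism with $\tau^2=\tau$ that preserves all existing joins, hence it also fixes $0,1$ and commutes with $'$. The one substantive point is to recover preservation of $\oplus$. First I would deduce meet-preservation from join-preservation via the De Morgan identity $a\wedge b=(a'\vee b')'$, which holds since all joins and meets exist: $\tau(a\wedge b)=(\tau(a')\vee\tau(b'))'=\tau(a)\wedge\tau(b)$. Then, using the identity $x\oplus y=x+(y\wedge x^*)$ already exploited in Proposition \ref{pr:3.5}, I compute $\tau(x\oplus y)=\tau(x)+\tau(y\wedge x^*)=\tau(x)+(\tau(y)\wedge\tau(x)^*)=\tau(x)\oplus\tau(y)$, so $\tau$ is an MV-endomorphism and, with $\tau^2=\tau$, an MV-state-morphism-operator. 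The (ESP) assertion now follows either from the forward direction just proved or directly from meet-preservation.

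The proposition is largely formal, so I expect no serious obstacle in the algebraic bookkeeping; the only genuinely non-formal input is the extremal-state characterization underlying (ESP). The point demanding the most care is making sure that every join and meet I manipulate actually exists in the effect algebra — this is exactly what the lattice structure inherited from the MV-order guarantees — and that the defining identity $x\oplus y=x+(y\wedge x^*)$ is applied in the correct direction on each side of the computation.
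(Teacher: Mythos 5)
Your proposal is correct and follows essentially the same route as the paper: the forward direction rests on the fact that an idempotent MV-endomorphism automatically preserves the (term-definable) lattice operations, and (ESP) in both directions is extracted from Goodearl's characterization of extremal states via $s(a\wedge b)=\min\{s(a),s(b)\}$ exactly as in Proposition \ref{pr:3.4}. The only difference is that you spell out the converse's recovery of $\oplus$-preservation via $x\oplus y=x+(y\wedge x^{*})$, a computation the paper leaves implicit there because it already appears in the proof of Proposition \ref{pr:3.5}.
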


\begin{proof} Let $\tau$ be an MV-state-morphism-operator on $A.$
Due to Proposition \ref{pr:3.5}, $\tau$ is a strong state-operator
on the effect algebra $A$ that preserves all joins and meets.

Let $s$ be an extremal state, that $s\circ \tau$ is also an extremal
state because $s\circ \tau(a\wedge b) = s(\tau(a\wedge b)) =
s(\tau(a)\wedge \tau(b)) = \min\{s(\tau(a)),s(\tau(b))\}$ for all
$a,b \in A.$

Conversely, let $\tau$ be a state-morphism-operator on the effect
algebra $A.$ Then $\tau$ preserves all joins and meets in $A,$ so
that $\tau$ is an MV-state-morphism-operator on the MV-algebra $A.$
Due to the first part of the present proof, $\tau$ satisfies (ESP)
on the effect algebra $A.$
\end{proof}

\section{Effect-Clans and Choquet Simplices}

In this section, we show a close connection between the state spaces
of effect algebras and Choquet simplices.

An {\it effect-clan} is a system  ${\mathcal E}$  of $[0,1]$-valued
functions on $\Omega\ne \emptyset$ such that (i) $1 \in {\mathcal
E}$, (ii) $f \in {\mathcal E}$ implies $1-f \in {\mathcal E}$, and
(iii) if $f,g \in {\mathcal E}$ and $f(\omega) \le 1 -g(\omega)$ for
any $\omega \in \Omega$, then $f+g \in {\mathcal E}$. Then the effect-clan
${\mathcal E}$ is an effect algebra that
is not necessarily a Boolean algebra nor an MV-algebra.

If $E$ is an effect-clan of characteristic functions on $\Omega$,
then $E$ satisfies (RDP) iff $E_0=\{A\subseteq \Omega: \chi_A \in
E\}$ is an algebra of subsets of $\Omega.$  For example, if $\Omega$
is a finite set with an even number of elements, the set of all
characteristic functions of subsets of $\Omega$  with an even number of
subsets is an effect-clan where (RDP) fails:
$\chi_{\{1,2\}},\chi_{\{1,3\}} \in E$ but $\chi_{\{1\}} \notin E.$

\begin{Proposition}\label{pr:4.2}  Let $E=\Gamma(G,u),$ where
$(G,u)$ is a unigroup. The following statements are equivalent:

\begin{enumerate}
\item[{\rm (i)}]   ${\mathcal S}(E)$ is order determining.

\item[{\rm (ii)}] $E$ is isomorphic to some effect-clan.

\item[{\rm (iii)}] $G$ is Archimedean.

\end{enumerate}

\end{Proposition}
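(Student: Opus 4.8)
The plan is to establish the three-way equivalence by treating (i)$\Leftrightarrow$(iii) separately from the pair (i)$\Leftrightarrow$(ii); the hypothesis that $(G,u)$ is a unigroup will enter only in the former, while the latter two implications are really statements about effect-clans and states.

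First I would prove (i)$\Leftrightarrow$(iii) directly from Remark \ref{re:2.3}. By that remark, since $(G,u)$ is a unigroup, $(G,u)$ is Archimedean if and only if $E=\Gamma(G,u)$ possesses \emph{some} order determining system of states. It then remains to observe that having some order determining subsystem $\mathcal S\subseteq\mathcal S(E)$ is equivalent to $\mathcal S(E)$ itself being order determining: the inequality $s(f)\le s(g)$ whenever $f\le g$ holds for every state, and if a smaller family $\mathcal S$ already forces $f\le g$ from $\{s(f)\le s(g):s\in\mathcal S\}$, then a fortiori the larger family $\mathcal S(E)$ does so. This yields (i)$\Leftrightarrow$(iii).

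For (i)$\Rightarrow$(ii) I would use the evaluation embedding. Put $\Omega:=\mathcal S(E)$ and, for $a\in E$, define $\hat a:\Omega\to[0,1]$ by $\hat a(s):=s(a)$. Then $\hat 1\equiv 1$, $\widehat{a'}=1-\hat a$, and whenever $a+b$ is defined we have $\widehat{a+b}=\hat a+\hat b$; thus $a\mapsto\hat a$ is a homomorphism whose image contains the constant $1$ and is closed under $f\mapsto 1-f$. The point where order-determination is needed is closure of the image under the clan addition: if $\hat a(s)+\hat b(s)\le 1$ for all $s$, then $s(a)\le s(b')$ for all $s$, whence $a\le b'$ by (i), so $a+b$ is defined in $E$ and $\hat a+\hat b=\widehat{a+b}$ again lies in the image. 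Injectivity is immediate from (i) (if $\hat a=\hat b$ then $s(a)=s(b)$ for all $s$, forcing $a=b$), so $a\mapsto\hat a$ is an isomorphism of $E$ onto an effect-clan on $\Omega$.

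Finally, for (ii)$\Rightarrow$(i), suppose $E$ is isomorphic to an effect-clan $\mathcal E$ on some $\Omega$. Each point $\omega\in\Omega$ yields a state $s_\omega\in\mathcal S(\mathcal E)$ by $s_\omega(f)=f(\omega)$. The crucial observation is that in any effect-clan the effect-algebra order coincides with the pointwise order: if $f,g\in\mathcal E$ and $f(\omega)\le g(\omega)$ for all $\omega$, then $f+(1-g)\le 1$ pointwise, so $f+(1-g)\in\mathcal E$ by clan-closure, and therefore $g-f=\bigl(f+(1-g)\bigr)'\in\mathcal E$, i.e. $f\le g$ in $\mathcal E$. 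Consequently, if $s(f)\le s(g)$ for every $s\in\mathcal S(\mathcal E)$, then in particular $f(\omega)\le g(\omega)$ for all $\omega$, hence $f\le g$; transporting along the isomorphism shows $\mathcal S(E)$ is order determining, which is (i). I expect the main obstacle to be precisely these two closure/identification steps — verifying in (i)$\Rightarrow$(ii) that the image is genuinely closed under the partial clan addition, and in (ii)$\Rightarrow$(i) that pointwise domination already forces effect-algebra domination; both hinge on the complementation closure of clans together with order-determination, rather than on any deeper separation argument.
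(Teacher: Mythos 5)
Your proof is correct, but it closes the equivalence by a genuinely different route than the paper. The paper proves the cycle (i) $\Rightarrow$ (ii) $\Rightarrow$ (iii) $\Rightarrow$ (i): the evaluation map $a\mapsto\hat a$ gives (i) $\Rightarrow$ (ii) essentially as in your second paragraph; for (ii) $\Rightarrow$ (iii) it forms the po-group $G(\hat E)$ generated by the effect-clan (all functions $\hat a_1+\cdots+\hat a_n-\hat b_1-\cdots-\hat b_m$, with strong unit $\hat 1$), notes that this pointwise-ordered function group is Archimedean, and transfers Archimedeanness to $(G,u)$ via the unigroup (universal property) identification $(G,u)\cong(G(\hat E),\hat 1)$; finally (iii) $\Rightarrow$ (i) is Goodearl's Theorem 4.14 plus restriction of states from $(G,u)$ to $E$. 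You instead prove (ii) $\Rightarrow$ (i) directly: point evaluations are states on any effect-clan, and your observation that the pointwise order coincides with the effect-algebra order in a clan (via $g-f=\bigl(f+(1-g)\bigr)'\in\mathcal E$) makes them an order determining family. This is more elementary than the paper's detour through $G(\hat E)$, and it has the pleasant consequence that (i) $\Leftrightarrow$ (ii) holds with no unigroup hypothesis at all, as you point out. The trade-off is that your (i) $\Leftrightarrow$ (iii) leans entirely on Remark \ref{re:2.3}, i.e., on Goodearl's Theorem 4.14 together with the assertion that for unigroups $E$ has an order determining system of states iff $(G,u)$ does; this is legitimate, since the paper states that remark as established background, but note that the paper's own (iii) $\Rightarrow$ (i) uses only the easy restriction direction of that correspondence, whereas your (i) $\Rightarrow$ (iii) invokes the deeper extension direction, which is exactly where the unigroup hypothesis does real work. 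Both routes are sound; yours isolates the unigroup content in one place and makes the clan-theoretic part of the proposition a statement about arbitrary effect algebras.
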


\begin{proof} (i) $\Rightarrow$ (ii). Given $a\in E$, let $\hat a$ be a
function from ${\mathcal S}(E)$ into the real interval $[0,1]$ such
that $\hat a(s):=s(a)$ for any $s \in {\mathcal S}(E),$ and let
${\hat E}=\{\hat a:\ a \in E\}.$ We endow $\hat E$ with pointwise
addition, so that $\hat E$ is an effect-clan. Since ${\mathcal
S}(E)$ is order determining, the mapping $a\mapsto \hat a$ is an
isomorphism and $\hat E$ is Archimedean.

(ii) $\Rightarrow$ (iii).  Let $\hat E$ be any effect-clan
isomorphic with $E,$ and let $a\mapsto \hat a$ be such an
isomorphism. Let $G(\hat E)$ be the po-group generated by $\hat E.$
Then $G(\hat E)$ consists of all functions of the form $\hat
a_1+\cdots+\hat a_n -\hat b_1-\cdots- \hat b_m,$ and $\hat 1$ is its
strong unit.   Due to the categorical equivalence, $(G,u)$ and
$(G(\hat E),\hat 1)$ are isomorphic. But $(G(\hat E),\hat 1)$ is
Archimedean, so is $(G,u).$

(iii) $\Rightarrow$ (i). Due to \cite[Thm 4.14]{Goo}, $G^+=\{g \in
G:\ s(g) \ge 0$ for all $ s \in {\mathcal S}(G,u)\},$ which means
that ${\mathcal S}(G,u)$ is order determining. Hence, the
restrictions of all states on $(G,u)$ onto $E=\Gamma(G,u)$ imply
${\mathcal S}(E)$ is order determining.
\end{proof}

We say a poset $E$ is {\it monotone $\sigma$-complete} provided that
for every ascending (descending) sequence $x_1 \le x_2 \le \cdots $
($x_1 \ge x_2 \ge \cdots$) in $E$ which is bounded above (below) in
$E$ has a supremum (infimum) in $E.$

An {\it effect-tribe} on a set $\Omega \ne \emptyset$ is any system
${\mathcal T} \subseteq [0,1]^\Omega$ such that (i) $1 \in {\mathcal
T}$, (ii) if $f \in {\mathcal T},$ then $1-f \in {\mathcal T}$,
(iii) if $f,g \in {\mathcal T}$, $f \le 1-g$, then $f+g \in
{\mathcal T},$ and (iv) for any sequence $\{f_n\}$ of elements of
${\mathcal T}$ such that $f_n \nearrow f$ (pointwise), then $f \in
{\mathcal T},$ i.e. if $f_n(\omega) \nearrow f(\omega)$ for every
$\omega \in \Omega,$ then $f \in {\mathcal T}.$ It is evident that
any effect-tribe is a monotone $\sigma$-complete effect algebra.

Now let $\Omega$ be a compact Hausdorff space. Then
$\mbox{C}(\Omega),$ the system of all real-valued  continuous
functions on $\Omega,$  is an $\ell$-group  (we recall that, for
$f,g \in \mbox{C}(\Omega),$ $f \le g$ iff $f(x)\le g(x)$ for any $x
\in \Omega$), and it is Dedekind $\sigma$-complete iff $\Omega$ is
basically disconnected (the closure of every open $F_\sigma$ subset
of $\Omega$ is open), see \cite[Lem 9.1]{Goo}.  In such a case,
$$C_1(\Omega):=\Gamma(\mbox{C}(\Omega),1_\Omega)
$$
is a  $\sigma$-complete MV-algebra with respect to the MV-operations
that are defined by points.

Let $\Omega$ be a convex subset of a real vector space $V.$ A point
$x\in \Omega$ is said to be {\it extreme} if from $x= \lambda
x_1+(1-\lambda)x_2,$ where $x_1,x_2 \in \Omega$ and $0<\lambda <1$
we have $x=x_1=x_2.$ By $\partial_e \Omega$ we denote the set of
extreme points of $\Omega.$

Let $\Omega$ and $\Omega_1$ be convex spaces. A mapping $f:\ \Omega
\to \Omega_1$ is said to be {\it affine} if, for all $x,y \in
\Omega$ and any $\lambda \in [0,1]$, we have $f(\lambda x
+(1-\lambda )y) = \lambda f(x) +(1-\lambda ) f(y)$.

Given a compact convex set $\Omega\ne \emptyset$ in a topological
vector space, we denote by $\mbox{Aff}(\Omega)$ the collection of
all real-valued affine continuous functions on $\Omega$. Of course,
$\mbox{Aff}(\Omega)$ is a po-subgroup of the po-group
$\mbox{C}(\Omega)$ of all continuous real-valued functions on
$\Omega$, hence it is an
Archimedean unital po-group with the strong unit $1$.

We recall that a {\it convex cone} in a real linear space $V$ is any
subset $C$ of  $V$ such that (i) $0\in C,$ (ii) if $x_1,x_2 \in C,$
then $\alpha_1x_1 +\alpha_2 x_2 \in C$ for any $\alpha_1,\alpha_2
\in \mathbb R^+.$  A {\it strict cone} is any convex cone $C$ such
that $C\cap -C =\{0\},$ where $-C=\{-x:\ x \in C\}.$ A {\it base}
for a convex cone $C$ is any convex subset $\Omega$ of $C$ such that
every non-zero element $y \in C$ may be uniquely expressed in the
form $y = \alpha x$ for some $\alpha \in \mathbb R^+$ and some $x
\in \Omega.$

We recall that in view of \cite[Prop 10.2]{Goo}, if $\Omega$ is a
non-void convex subset of $V,$ and if we set
$$
C =\{\alpha x:\ \alpha \in \mathbb R^+,\ x \in \Omega\},
$$
then $C$ is a convex cone in $V,$ and $\Omega$ is a base for $C$ iff
there is a linear functional $f$ on $V$ such that $f(\Omega) = 1$
iff $\Omega$ is contained in a hyperplane in $V$ which misses the
origin.

Any strict cone $C$ of $V$ defines a partial order $\le_C$ via $x
\le_C y$ iff $y-x \in C.$ It is clear that $C=\{x \in V:\ 0 \le_C
x\}.$ A {\it lattice cone} is any strict convex cone $C$ in $V$ such
that $C$ is a lattice under $\le_C.$

A {\it simplex} in a linear space $V$ is any convex subset $\Omega$
of $V$ that is affinely isomorphic to a base for a lattice cone in
some real linear space. A  simplex $\Omega$ in a locally convex
Hausdorff space is said to be (i) {\it Choquet} if $\Omega$ is
compact, and (ii) {\it Bauer} if $\Omega$ and $\partial_e \Omega$
are compact.

Choquet and Bauer simplices are very important for our study because
(i) if $E$ is with (RDP), then $\mathcal S(E)$ is a Choquet simplex,
\cite[Thm 10.17]{Goo};  if $\Omega$ is a convex compact subset of a
locally convex Hausdorff space, then (ii) $\Omega$ is a Choquet
simplex iff $(\mbox{Aff}(\Omega),1)$ is an interpolation po-group,
\cite[Thm 11.4]{Goo}, (iii) $\mathcal S(E)$ is a Bauer simplex
whenever $E$ is an MV-algebra (Example \ref{ex:clan} below gives an
effect algebra with (RDP) that is not MV-algebra), and (iv) $\Omega$
is a Bauer simplex iff $(\mbox{Aff}(\Omega),1)$ is an $\ell$-group,
\cite[Thm 11.21]{Goo}.

Let  $\mathcal S(E)\ne \emptyset.$ Given $a \in E,$ we define the
mapping $\hat a:\mathcal S(E) \to [0,1]$  by $\hat a(s):= s(a),$ $s
\in \mathcal S(E),$  and let  $\widehat E:=\{\hat a: a \in A\}.$
Then $\widehat E$ is an effect-clan if, e.g.,  $\mathcal S(E)$ is
order determining (see Remark \ref{re:2.3} and Proposition
\ref{pr:4.2}; in such a case, $E$ and $\hat E$ are isomorphic), or
$E$ is an MV-algebra, or $E$ is a monotone $\sigma$-complete effect
algebra with (RDP), see Theorem \ref{th:5.2} below, and in these
cases the natural mapping $\psi(a):= \hat a,$ ($a \in E$) is a
homomorphism of $E$ onto $\widehat E.$

In general, $\widehat E$ is not necessarily an effect-clan:

\begin{Example}\label{ex:clan}
There is an effect algebra with {\rm (RDP)} such that $\widehat E$
is not an effect-clan.
\end{Example}

\begin{proof}
Let $\mathbb Q$ be the set of all rational numbers and let
$G=\mathbb Q\times \mathbb Q$ be ordered by the strict ordering,
i.e. $(g_1,g_2)\le (h_1,h_2)$ iff $g_1 < h_1$ and $g_2< h_2$ or
$g_1=h_1$ and $g_2=h_2.$ If we set $u=(1,1),$ $(G,u)$ is a unital
po-group with interpolation.

If we set $s_0(g,h):=h$ and $s_1(g,h):=g,$ then $s_0$ and $s_1$ are
states on $(G,u).$  We claim $\partial_e \mathcal S(G,u)
=\{s_0,s_1\}.$

Let $s \in \partial_e \mathcal S(G,u).$  Since $s(1,1)=1,$ we have
$1=s(\frac{k}{k},\frac{k}{k}) = k s(\frac{1}{k},\frac{1}{k}),$ i.e.,
$s(\frac{1}{k},\frac{1}{k})= \frac{1}{k}$ for any integer $k \ge 1.$
Hence, $s(g,g)=g$ for any $g \in \mathbb Q^+$ and consequently for
any $g \in \mathbb Q.$

Now let $g<h$ be two rational numbers. Take a sequence of rational
numbers $\{\delta_i\} \searrow 0.$ Then

\begin{eqnarray*}
s(g-\delta_i, g-\delta_i)\le &s(g,h)& \le s(h+\delta_i,h+\delta_i),\\
g-\delta_i \le &s(g,h)& \le h-\delta_i,
\end{eqnarray*}
so that
$$g\le s(g,h)\le h.
$$
In a similar way, $g \le s(h,g)\le h.$
Therefore, $1=s(1,1)= s(1,0) + s(0,1)$ and $\lambda:=s(1,0)$ and
$1-\lambda = s(0,1)$ are positive numbers.

If $\lambda =0,$ then $s(1/k,0)=0$ for each integer $k\ge 1,$ so
that $s(g,0)=0$ for any $g \in \mathbb Q^+$ and for any $g \in
\mathbb Q.$ Hence, $s(g,h) = s(0,h) = h = s_0(g,h).$  In a similar
way, if $\lambda = 1,$ then $s(g,h)=g=s_1(g,h).$  If $0<\lambda <1,$
we define $s_0'(g,h) = s(0,h)/\lambda$ and
$s_1'(g,h):=s(g,0)/(1-\lambda),$ $s_0'$ and $s_1'$ are states on
$(G,u)$ such that $s_0'=s_0$ and $s_1'=s_1$ and $s =
s_\lambda:=\lambda s_0 +(1-\lambda)s_1,$ $\lambda \in [0,1],$ so
that $\partial_e \mathcal S(G,u)=\{s_0,s_1\}.$

Let us set $E=\Gamma(G,u).$  Then $E$ is an effect algebra with
(RDP), and $\mathcal S(E) =\{s'_\lambda:\lambda \in [0,1]\}$ where
$s'_\lambda$ is the restriction of $s_\lambda$ to $E.$ We next
define $\widehat E= \{\hat a: a\in E\}.$  We assert that this
$\widehat E$ is not an effect-clan.

Indeed, let $a=(0.3,0.3),$ $b=(0.7,0.4).$  Then $\hat a(s'_\lambda)=
0.3\lambda +0.3(1-\lambda) =0.3  \le 1-\hat b(s'_\lambda) =
0.3\lambda +0.6(1-\lambda)$ for any $\lambda\in [0,1],$ but there is
no $c \in E$ such that $\hat c = \hat a + \hat b$ because $\hat
a(s_0')+ \hat b(s_0')=1$ and $\hat a(s_1')+\hat b(s_1')=0.7.$
\end{proof}

We note that the former example shows that $\mathcal S(E)$ is a
Bauer simplex that is not order determining but it is {\it
separating}, i.e. $s(a)=s(b)$ for any state $s$ on $E$ implies
$a=b.$ In addition, $(G,u)$ is not Archimedean, see Proposition
\ref{pr:4.2}.

Let us define

$$
A(\mathcal S(E)):= \Gamma(\Aff(\mathcal S(E)),1).
$$

Hence, $A(\mathcal S(E))$ is an effect algebra with (RDP) whenever
$\mathcal S(E)$ is a Choquet simplex, in particular  $E$ satisfies
(RDP), and $E$ can be converted into an MV-algebra when $\mathcal
S(E)$ is a Bauer simplex.

Nevertheless not every state space $\mathcal S(E)$ is a Bauer
simplex, we recall that according to a delicate result of Choquet
\cite[Thm I.5.13]{Alf}, $\partial_e {\mathcal S}(E)$ is always a
Baire space in the relative topology induced by the topology of
${\mathcal S}(E)$, i.e. the Baire Category Theorem holds for
$\partial_e {\mathcal S}(E).$

The following lemma  \cite[Lem 7]{BSW}, \cite[Cor 3]{Wri} will play
a crucial role in our investigation for the Loomis--Sikorski
Theorem.

\begin{Lemma}\label{le:4.1}
Let $\{a_i\}$ be  a monotone descending  sequence of nonnegative
functions in $\mbox{\rm Aff}(\Omega)$, where $\Omega$ is a convex
compact set and let $a(x) = \lim_i a_i(x)$ for any $x \in \Omega$.
Then $\bigwedge_i a_i = 0$ in $\mbox{\rm Aff}(\Omega)$ if and only
if $\{x \in \partial_e \Omega:\ a(x) > 0\}$ is a meager subset in
the relative topology of $\partial_e \Omega.$
\end{Lemma}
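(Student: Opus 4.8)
The plan is to prove Lemma~\ref{le:4.1} by relating the lattice-theoretic infimum $\bigwedge_i a_i = 0$ in $\mathrm{Aff}(\Omega)$ to the pointwise limit $a$ on the extreme boundary, using the fact that $\Omega$ is a compact convex set and that continuous affine functions on $\Omega$ are determined by their values on $\partial_e\Omega$. The key tool is that for a compact convex set, every continuous affine function $f$ attains its maximum at an extreme point (Bauer's maximum principle), so the supremum norm of $f$ restricted to $\partial_e\Omega$ equals the supremum norm of $f$ on all of $\Omega$; dually, since $\{a_i\}$ is a monotone descending sequence of nonnegative continuous affine functions, the pointwise limit $a$ is a bounded upper semicontinuous affine function on $\Omega$, and its behaviour on $\partial_e\Omega$ controls its behaviour everywhere.

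First I would dispose of the ``only if'' direction. Suppose $\bigwedge_i a_i = 0$ in $\mathrm{Aff}(\Omega)$. I want to show that $M := \{x \in \partial_e\Omega : a(x) > 0\}$ is meager in $\partial_e\Omega$. Write $M = \bigcup_{n\ge 1} M_n$ where $M_n = \{x \in \partial_e\Omega : a(x) \ge 1/n\}$. By Dini-type reasoning together with upper semicontinuity of the limit $a$, each set $\{x\in\Omega: a(x)\ge 1/n\}$ is closed, and its trace $M_n$ on $\partial_e\Omega$ is relatively closed. The crux is to show each $M_n$ is nowhere dense in $\partial_e\Omega$: if some $M_n$ had nonempty interior in $\partial_e\Omega$, I would construct a nonzero lower bound for the family $\{a_i\}$ in $\mathrm{Aff}(\Omega)$, contradicting $\bigwedge_i a_i = 0$. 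This is where one needs to pass from a pointwise lower bound on a subset of $\partial_e\Omega$ back to a genuine affine continuous function on $\Omega$ bounding the $a_i$ from below --- precisely the step that uses the interplay between the order structure of $\mathrm{Aff}(\Omega)$ and the representation theory of the simplex. Because each $M_n$ is relatively closed, writing $M$ as their countable union exhibits it as meager.

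For the ``if'' direction, I would argue contrapositively: suppose $\bigwedge_i a_i \ne 0$ in $\mathrm{Aff}(\Omega)$, so there is a nonzero $b \in \mathrm{Aff}(\Omega)$ with $0 \le b \le a_i$ for all $i$. Then $0 < b(x) \le \lim_i a_i(x) = a(x)$ on the (nonempty, open) set where $b$ is strictly positive. Since $b$ is a nonzero nonnegative continuous affine function, by Bauer's maximum principle $b$ attains a strictly positive value at some extreme point, and more strongly the set $\{x \in \partial_e\Omega : b(x) > 0\}$ is a nonempty relatively open subset of $\partial_e\Omega$ on which $a > 0$. A nonempty relatively open subset of a Baire space is nonmeager, and here I invoke Choquet's theorem (cited just above the lemma) that $\partial_e\Omega$ is a Baire space in its relative topology. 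Hence $\{x \in \partial_e\Omega : a(x) > 0\}$ contains a nonmeager set and so is not meager.

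The main obstacle I anticipate is the reconstruction step in the ``only if'' direction: converting a positive lower bound for $a$ on a relatively open (or nonmeager closed) subset of $\partial_e\Omega$ into an honest nonzero element of $\mathrm{Aff}(\Omega)$ lying below all the $a_i$. Since $\mathrm{Aff}(\Omega)$ need not be a lattice (the lemma is stated for an arbitrary convex compact $\Omega$, not merely a Bauer simplex), one cannot simply take pointwise minima and stay inside $\mathrm{Aff}(\Omega)$; instead one must work with the order-theoretic infimum in $\mathrm{Aff}(\Omega)$ and exploit affineness together with the Baire-space structure of the boundary. I expect the cleanest route is to quote the cited sources \cite{BSW, Wri} for this technical core and to present the boundary/meagerness bookkeeping in full, since that is the part that adapts their statement to the exact form needed later for the Loomis--Sikorski argument.
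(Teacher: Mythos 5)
First, note that the paper does not prove this lemma at all: it is quoted verbatim from \cite[Lem 7]{BSW} and \cite[Cor 3]{Wri}, so there is no internal proof to compare yours against. Judged on its own terms, your proposal is half a proof. The ``if'' direction is essentially complete and correct: from $\bigwedge_i a_i\ne 0$ you get a lower bound $b\in\Aff(\Omega)$ of $\{a_i\}$ with $b\not\le 0$ (you do not actually get $0\le b$, since $\Aff(\Omega)$ is not a lattice and you cannot form $b\vee 0$; but you do not need it), Bauer's maximum principle puts the positive maximum of $b$ at an extreme point, so $\{x\in\partial_e\Omega: b(x)>0\}$ is a nonempty relatively open set on which $a\ge b>0$, and Choquet's theorem that $\partial_e\Omega$ is a Baire space (cited in the paper just before the lemma) makes any superset of it nonmeager.

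The ``only if'' direction, however, contains a genuine gap at exactly the point you flag. Writing $M=\bigcup_n M_n$ with $M_n=\{x\in\partial_e\Omega: a(x)\ge 1/n\}$ relatively closed (correct, since $a=\inf_i a_i$ is upper semicontinuous) reduces the claim to: each $M_n$ is nowhere dense. Your plan is to derive from a nonempty relatively open subset $U$ of $\partial_e\Omega$ contained in $M_n$ a \emph{nonzero} element of $\Aff(\Omega)$ lying below every $a_i$, but no construction of such an element is given, and this is the entire technical content of the lemma. One cannot simply take a Urysohn-type bump supported in $U$: such a function is not affine, and for a general compact convex $\Omega$ (the lemma is not restricted to simplices) passing from a lower bound of $a$ on a relatively open piece of the boundary to a continuous affine minorant of all the $a_i$ on $\Omega$ requires boundary-measure/upper-envelope arguments of the kind carried out in \cite{BSW} and \cite{Wri}. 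Since you end by proposing to quote those sources for precisely this step, your argument is not self-contained where it matters; it reproduces the citation the paper already makes, plus a correct but easier converse. To count as a proof you would need to supply the construction of the nonzero affine lower bound, or explicitly present the lemma as imported from \cite{BSW,Wri}, as the paper does.
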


\section{Characterization of $n$-State-Operators on  Effect Algebras} 

We remind the reader that a state $s$ on $E$ is {\it discrete} if there is an
integer $n\ge 1$ such that $s(E)\subseteq \{0,1/n,\ldots, n/n\}.$

\begin{Proposition}\label{pr:5.1}  Let $E$ be an effect algebra with
$\mathcal S(E)\ne \emptyset$ and let $\tau$ be an $n$-state-operator
on $E.$ Then there is an affine continuous function $g:\mathcal S(E)
\to \mathcal S(E)$ such that $g^n =g,$ and $g(s)(E) \subseteq s(E)$
for any discrete state $s \in \mathcal S(E).$

Let
$$A(E) =\{f\in \Gamma(\Aff({\mathcal  S}(E)),1): f(s) \in s(E) \
\mbox{for all discrete}\ s \in \partial_e{\mathcal
S}(E)\}.\eqno(4.1)
$$
Then $A(E)$ is an effect-clan and the mapping $\tau_g:A(E) \to A(E)$
defined by $\tau_g(f) = f \circ g,$ $f \in A(E),$ is an
$n$-state-operator on $A(E).$

Suppose that $\widehat E$ is an effect algebra. If we define
$\widehat \tau$ as a mapping from  $\widehat E$ into itself such
that $\widehat\tau (\widehat a):=\widehat {\tau(a)}$ $(a\in E),$
then $\widehat \tau$ is a well-defined $n$-state-operator on
$\widehat E$ that is the restriction of $\tau_g.$

Conversely, if $g:\mathcal S(E)\to \mathcal S(E)$ is an arbitrary
affine and continuous function such that $g^n  = g,$  and $g(s)(E)
\subseteq s(E)$ for any discrete state $s \in \mathcal S(E),$ then
the mapping $\tau_g: A(E) \to A(E),$ defined by $\tau_g(f):= f\circ
g,$ $f \in A(E),$ is an $n$-state-operator.
\end{Proposition}

\begin{proof}
If $s \in \mathcal S(E),$  then $s\circ \tau \in \mathcal S(E).$
Therefore, the mapping $g: \mathcal S(E) \to \mathcal S(E)$ defined
by $g(s)= s\circ \tau,$ $s \in \mathcal S(E),$ is a well-defined
mapping and affine.

Moreover,  $g$ is continuous because if $s_\alpha \to s$, then we
have $\lim_\alpha g(s_\alpha)(a)=\lim_\alpha s_\alpha(\tau(a))
=s(\tau(a))= g(s)(a)$ for any $a \in E.$

>From the construction of $g$ we have $g^n = g$ because if $n=1,$
this is clear and if $n\ge 2,$ $g^n(s)=g^{n-1}(g(s))=g^{n-1}(s\circ
\tau)= s\circ\tau^n = s\circ \tau= g(s).$

Let $s$ be a discrete state on $E.$ Then $s(E) \subseteq
\{0,1/n,\ldots,n/n\}$ for some $n\ge 1$ and whence   $s(\tau(E))
\subseteq \{0,1/n,\ldots,n/n\}.$

It is clear that $A(E)$ is an effect-clan. Take $f \in A(E).$ Then
$f$ is a continuous function taking values in the interval $[0,1].$
To verify that $\tau_g(f) \in A(E)$ we have to show that
$\tau_g(f)(s)\in s(E)$ for any discrete extremal state $s$ on $E.$
Check: $\tau_g(f)(s)=f(g(s))=f(s\circ\tau) \in (s\circ \tau)(E))
\subseteq s(E)$ due to the just above proved statement. Hence,
$\tau_g(f)$ is again an element of $A(E).$ It is easy to verify that
$\tau_g$ is an $n$-potent endomorphism from $A(E)$ into itself.

Now we show that $\widehat \tau$ is a well-defined operator on
$\widehat E.$  Assume $\hat a = \hat b.$ This means $s(a)=s(b)$ for
any $s \in  \mathcal S(E)$.  Hence, $s(\tau(a)) = g(s)(a) = g(s)(b)=
s(\tau(b)),$ so that $\widehat {\tau (a)}= \widehat {\tau(b)}$  and
finally $\widehat \tau(\hat a)= \hat a \circ g =\hat b \circ g=
\widehat \tau (\hat b).$ Since $\widehat E$ is a subalgebra of
$A(E),$ $\widehat \tau$ is the restriction of $\tau_g.$

Finally, let $g:\mathcal S(E) \to \mathcal S(E)$ be an affine and
continuous function such that $g^n=g,$ $g(s)(E) \subseteq s(E)$ for
any discrete state $s \in \mathcal S(E).$  If $f \in A(E),$ then
$\tau_g(f):=f\circ g \in \Aff(\mathcal S(E)).$ If $s$ is an extremal
discrete state, then $f(g(s))   \in g(s)(E) \subseteq s(E)$ for any
discrete state $s\in \partial_e \mathcal S(E)$ so that $\tau_g(f)\in
A(E)$ and $\tau_g$ is an $n$-state-operator on $A(E).$
\end{proof}

The following principal representation theorem for monotone
$\sigma$-complete effect algebras with (RDP) follows from \cite[Cor
16.15]{Goo} and using the Ravindran representation theorem
\cite{Rav}, see also Remark \ref{re:2.2}.

\begin{Theorem}\label{th:5.2} Let $E$ be a nontrivial  monotone
$\sigma$-complete effect algebra with {\rm (RDP)}.  Then $E$ is
isomorphic with $A(E)$ defined  by $(4.1)$ and $\widehat E=A(E).$
\end{Theorem}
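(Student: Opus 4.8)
The plan is to translate the statement into the language of unital po-groups and then invoke the completeness machinery of Goodearl. First I would use Remark \ref{re:2.2}(1), via Ravindran's representation theorem \cite{Rav}, to fix a unique interpolation unital po-group $(G,u)$ --- a unigroup --- with $E \cong \Gamma(G,u)$; henceforth I identify $E$ with $\Gamma(G,u) = [0,u]$ and $u$ with the universal strong unit. Nontriviality of $E$ forces $u > 0$, so by \cite[Cor 4.4]{Goo} the state space $\mathcal S(G,u)$ is nonempty and $\mathcal S(E) \ne \emptyset$, which is what makes $A(E)$ meaningful. The next step is to transfer the monotone $\sigma$-completeness of $E$ to the whole group $G$: since $u$ is generative, every element of $G^+$ is a finite sum of elements of $[0,u]$ and $G = G^+ - G^+$, so a bounded monotone sequence in $G$ can be translated into $G^+$, bounded by some $ku$, and split via {\rm (RDP)} into finitely many monotone sequences inside $[0,u]$ whose suprema exist in $E$; reassembling these yields suprema in $G$. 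Thus $(G,u)$ is a monotone $\sigma$-complete interpolation group with order unit.

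With $(G,u)$ in this form, the heart of the argument is \cite[Cor 16.15]{Goo}, which identifies a monotone $\sigma$-complete interpolation group with order unit with a concrete group of affine continuous functions on its state space. Concretely, it yields that $(G,u)$ is Archimedean and that the evaluation map $g \mapsto \hat g$, $\hat g(s) = s(g)$, is an isomorphism of $(G,u)$ onto the po-group of all $f \in \Aff(\mathcal S(G,u))$ satisfying the discreteness constraints at the discrete extreme states. Being Archimedean, $(G,u)$ has an order determining state space by Remark \ref{re:2.3}, hence so does $E$; consequently $a \mapsto \hat a$ is injective and an order embedding, which already gives $E \cong \widehat E$ as effect algebras and, in particular, shows $\widehat E$ is itself an effect algebra.

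It then remains to identify $\widehat E$ with $A(E)$. One inclusion is routine: for $a \in E$ the function $\hat a$ lies in $\Gamma(\Aff(\mathcal S(E)),1)$ and satisfies $\hat a(s) = s(a) \in s(E)$ for every state $s$, in particular every discrete extreme one, so $\widehat E \subseteq A(E)$. The reverse inclusion $A(E) \subseteq \widehat E$ is the main obstacle, and it is exactly where the strength of \cite[Cor 16.15]{Goo} is needed: given $f \in A(E)$ one must produce $a \in E$ with $\hat a = f$, i.e.\ surjectivity of the representation onto the admissible affine functions. The defining condition $f(s) \in s(E)$ at discrete extreme states is precisely calibrated so that no further obstruction survives --- at a discrete extreme $s$ with $s(E) = \{0,1/n,\ldots,n/n\}$ the value $f(s)$ is forced into this finite set, and the cited corollary guarantees that every affine function meeting these constraints is realized as some $\hat a$. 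Restricting the group isomorphism to the order interval $[0,u]$ then delivers $E \cong A(E)$ together with $\widehat E = A(E)$, completing the proof.
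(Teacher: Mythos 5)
Your proposal is correct and follows essentially the same route as the paper, whose entire proof is the remark that the theorem ``follows from \cite[Cor 16.15]{Goo} and the Ravindran representation theorem \cite{Rav}'': you identify $E$ with $\Gamma(G,u)$ via Ravindran, pass monotone $\sigma$-completeness from $[0,u]$ to $G$, and apply Goodearl's representation, restricting back to the unit interval. The only difference is that you make explicit the details the paper leaves to the citations (the translation/RDP splitting argument for transferring $\sigma$-completeness to $G$, and the two inclusions $\widehat E\subseteq A(E)$ and $A(E)\subseteq\widehat E$), and these are carried out correctly.
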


We say that an effect algebra $E$ is {\it weakly divisible}, if
given an integer $n\ge 1,$ there is an element $v\in E$ such that
$n\cdot v:= v + \cdots + v = 1.$  In such a case, $E$ has no
extremal discrete state. We notice that according to (4.1), if $E$
is a weakly divisible effect algebra that is monotone $\sigma$-complete,  it
has no discrete extremal state, therefore, $E$ is {\it divisible},
that is, given $a\in E$ and $n\ge 1,$ there is an element $v\in E$
such that $n\cdot v= a.$ Consequently, for monotone
$\sigma$-complete effect algebras, $E,$ with (RDP) the notions of
 weak divisibility and  divisibility, as well as the property
that $E$ admits no discrete (extremal) state, coincide.

We say that an $n$-state-operator $\tau$ on an effect algebra $E$ is
{\it monotone} $\sigma$-{\it complete} if whenever $a_i \nearrow a,$
that is $a_i\le a_{i+1}$ for any $i\ge 1$ and $a=\bigvee_i a_i,$
then $\tau(a)=\bigvee_i\tau(a_i).$   We recall that if $\tau$ is a
monotone $\sigma$-complete $n$-state-morphism-operator, then it
preserves all existing countable suprema and infima existing in $E,$
and we call it a $\sigma$-{\it complete
$n$-state-morphism-operator.}

Let $f: \mathcal S(E) \to [0,1]$ be any function; we set
$N(f):=\{s\in \partial_e\mathcal S(E): f(s)\ne 0\}.$

Now we present a characterization of $\sigma$-complete
$n$-state-operators on  monotone $\sigma$-complete effect algebras
with (RDP).

\begin{Theorem}\label{th:5.3}
Let $\tau$ be a monotone $\sigma$-complete $n$-state-operator on a
monotone $\sigma$-complete  effect algebra $E$ with {\rm (RDP)}.
Then there is an affine continuous function $g$ defined on $\mathcal
S(E)$ into itself such that $g^n=g,$ $g(s)(E) \subseteq s(E)$ for
any discrete extremal state $s$ on $E$ and $\widehat \tau(\hat
a)=\hat a\circ g,$ $a\in E.$

Conversely, let $g$ be an affine continuous function on $\mathcal
S(E)$ into itself such that $g^n=g,$ $g(s)(E) \subseteq s(E)$ for
any discrete extremal state $s.$ Then the mapping $\tau_g$ defined
on $\widehat E$ by $\tau_g(\hat a):= \hat a\circ g,$ $a\in E,$ is a
monotone $\sigma$-complete $n$-state-operator on $\widehat E.$

In addition, if $\tilde \tau_g$ is defined on $E$ via $\tilde
\tau_g(a)=\tau_g(\hat a),$ $a \in E,$ then $\tilde \tau_g$ is a
monotone $\sigma$-complete $n$-state-operator on $E,$ and
$g(s)=s\circ \tilde \tau_g,$ $s \in \mathcal S(E).$

\end{Theorem}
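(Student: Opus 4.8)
The plan is to obtain almost everything from Proposition~\ref{pr:5.1} together with the representation Theorem~\ref{th:5.2}, so that the only genuinely new content is the \emph{monotone $\sigma$-completeness} of the operator. Since $E$ is a nontrivial monotone $\sigma$-complete effect algebra with (RDP), Theorem~\ref{th:5.2} gives $\widehat E=A(E)$ and makes the canonical map $\psi:E\to A(E)$, $\psi(a)=\widehat a$, an isomorphism; in particular $\mathcal S(E)\neq\emptyset$ and $\widehat E$ is an effect algebra, which is exactly what is needed to invoke the $\widehat\tau$-part of Proposition~\ref{pr:5.1}. For the forward direction I would then simply apply Proposition~\ref{pr:5.1} to $\tau$: it produces the affine continuous $g(s):=s\circ\tau$ with $g^n=g$ and $g(s)(E)\subseteq s(E)$ for discrete $s$. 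The claimed formula is a one-line verification: evaluating at $s$, $\widehat\tau(\widehat a)(s)=\widehat{\tau(a)}(s)=s(\tau(a))=g(s)(a)=\widehat a(g(s))$, so $\widehat\tau(\widehat a)=\widehat a\circ g$.

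For the converse, the corresponding half of Proposition~\ref{pr:5.1} already asserts that $\tau_g(f)=f\circ g$ is an $n$-state-operator on $A(E)=\widehat E$, so the ``$n$-state-operator'' claim is free. I would then define $\tilde\tau_g:=\psi^{-1}\circ\tau_g\circ\psi$ on $E$; being conjugate to $\tau_g$ by the isomorphism $\psi$, it is automatically an $n$-state-operator on $E$, and it is monotone $\sigma$-complete \emph{iff} $\tau_g$ is. The identity $g(s)=s\circ\tilde\tau_g$ is again a direct computation, valid for every $a\in E$: $s(\tilde\tau_g(a))=\widehat{\tilde\tau_g(a)}(s)=(\widehat a\circ g)(s)=g(s)(a)$.

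What remains -- and what I expect to carry the entire difficulty -- is that $\tau_g$ preserves suprema of increasing sequences in $A(E)$. Here Lemma~\ref{le:4.1} is the tool. Given $\widehat a_i\nearrow\widehat a$ in $A(E)$, put $c_i:=\widehat a-\widehat a_i\searrow$, so that $\bigwedge_i c_i=0$ and, by Lemma~\ref{le:4.1}, the pointwise limit $c:=\lim_i c_i$ (nonnegative and upper semicontinuous) satisfies: $\{s\in\partial_e\mathcal S(E):c(s)>0\}$ is meager. Applying $\tau_g$ gives the descending sequence $c_i\circ g$ of nonnegative functions in $\Aff(\mathcal S(E))$ with pointwise limit $c\circ g$, and, using Lemma~\ref{le:4.1} once more, $\bigwedge_i(c_i\circ g)=0$ (equivalently $\bigvee_i(\widehat a_i\circ g)=\widehat a\circ g$, which is monotone $\sigma$-completeness) holds precisely when
\[
N:=\{s\in\partial_e\mathcal S(E):c(g(s))>0\}
\]
is meager in $\partial_e\mathcal S(E)$.

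Thus the crux is the \emph{transfer of meagerness} from the defect set $M=\{c>0\}\cap\partial_e\mathcal S(E)$ to its ``$g$-pullback'' $N=(g|_{\partial_e\mathcal S(E)})^{-1}\{c>0\}$. Because $c\circ g$ is upper semicontinuous, $N=\bigcup_k\{s\in\partial_e\mathcal S(E):c(g(s))\ge 1/k\}$ is a countable union of relatively closed sets, so by the Baire property of $\partial_e\mathcal S(E)$ (the Choquet theorem recalled above) it suffices to show none of these contains a nonempty relatively open $U$; such a $U$ would force $g(U)$ into the closed convex set $\{c\ge 1/k\}$, whose trace on $\partial_e\mathcal S(E)$ is nowhere dense, and one must derive a contradiction. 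I expect this to be the hard part, and the place where the hypotheses must be used to their full strength: continuity and idempotency of $g$ alone do \emph{not} suffice, since $g$ need neither carry extreme points to extreme points nor pull meager sets back to meager sets (a constant map $g$ already makes the naive pullback of a meager set fill all of $\partial_e\mathcal S(E)$). Closing this step therefore requires exploiting the precise interplay between $g$, the relation $g^n=g$, and the extreme boundary -- i.e.\ that the $g$ in play genuinely comes from, or is compatible with, a monotone $\sigma$-complete operator -- and this is the single point I would treat most carefully.
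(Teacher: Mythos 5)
Your overall architecture is the same as the paper's: both directions are funneled through Proposition~\ref{pr:5.1} and Theorem~\ref{th:5.2}, the identities $\widehat\tau(\hat a)=\hat a\circ g$ and $g(s)=s\circ\tilde\tau_g$ are the same one-line evaluations, and the only substantive content is the monotone $\sigma$-completeness of $\tau_g$ in the converse direction, which both you and the paper attack via Lemma~\ref{le:4.1}. But your proposal stops exactly at the decisive step: you correctly reduce the claim to the meagerness of $N=\{s\in\partial_e\mathcal S(E): c(g(s))>0\}$, where $c=\lim_i(\hat a-\hat a_i)$, and then explicitly announce that you do not know how to establish it. A proof whose crucial step is declared open is not a proof, so as it stands the proposal has a genuine gap.

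That said, your diagnosis of where the difficulty sits is exactly right, and in fact sharper than the paper's own treatment: the paper disposes of precisely this point with the single word ``Similarly'' (inferring that $N(\hat a\circ g-a_0\circ g)$ is meager from the meagerness of $N(\hat a-a_0)$), which is not a valid inference, since meagerness does not transfer to preimages under $g$ and $g$ is not even assumed to map $\partial_e\mathcal S(E)$ into itself. Your constant-map remark can be upgraded to an actual counterexample to the converse as literally stated: take $E=[0,1]^{\mathbb N}=\Gamma(\ell^\infty,1)$ (divisible, so the discrete-state condition is vacuous), let $s_0$ be a free-ultrafilter limit state, and let $g\equiv s_0$; then $g$ is affine, continuous and satisfies $g^n=g$, yet for $a_i=\chi_{\{1,\dots,i\}}\nearrow 1$ one has $\tau_g(\hat a_i)=0$ for every $i$ while $\tau_g(\hat 1)=1$, so $\tau_g$ is not monotone $\sigma$-complete. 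So the step you could not close is not merely hard: it cannot be closed from the stated hypotheses, and the paper's proof does not supply the missing ingredient either. (By contrast, in the forward direction the monotone $\sigma$-completeness of $\widehat\tau$ is free, since it is conjugate to the given monotone $\sigma$-complete $\tau$ via the isomorphism of Theorem~\ref{th:5.2}; your write-up handles that part correctly.)
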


\begin{proof}
Since $E$ is monotone $\sigma$-complete, due to Theorem
\ref{th:5.2}, $E$ is isomorphic to $A(E)$ as defined by (4.1). By
Proposition \ref{pr:5.1}, there is an affine and continuous function
$g: \mathcal S(E) \to \mathcal S(E)$ such that $g^n = g$ and
$g(s)(E) \subseteq S(E)$ for any discrete extremal state $s$ on $E,$
and the mapping $\tau_g: A(E)\to A(E)$ defined by $\tau_g(f):=
f\circ g,$ $f\in A(E),$ is an $n$-state-operator on $A(E).$

In what follows, we show that $\tau_g$ is monotone
$\sigma$-complete.

Assume that $a=\bigvee_i a_i,$ for $a_i \nearrow a,$ or
equivalently, $\hat a = \bigvee_i \hat a_i.$  Then $\hat a_i \circ g
\le \hat a_{i+1} \circ g \le \hat a\circ g.$

If $a_0(s)= \lim_i \hat a_i(s),$ $s \in \mathcal S(E),$ i.e. $a_0$
is a point limit of continuous functions on a compact Hausdorff
space, due to  Lemma \ref{le:4.1}, the set $N(a_0-\hat a)$ is a
meager set. Similarly, $N(\hat a\circ g - a_0\circ g)$ is a meager
set. If $h= \bigvee_i \hat a_i\circ g,$ then $h \le \hat a\circ g.$
Since $N(h- \hat a\circ g) \subseteq N(h-a_0\circ g)\cup N(a_0\circ
g - \hat a\circ g),$ this yields that $N(h-\hat a \circ g)$ is a
meager set. Due to the Baire Category Theorem that says that no
non-empty open set of a compact Hausdorff space can be a meager set,
we have $N(h-\hat a \circ g)=\emptyset,$ that is $h= \hat a\circ g.$

Finally, let $a\in E$ and $s\in \mathcal S(E).$ Then $(s\circ \tilde
\tau_g)(a) = s(\tilde \tau_g(a))= s(\tau_g(\hat a)) = \hat a(g(s))=
g(s)(a),$ that is $g(s) = s\circ \tilde \tau_g$ for any $s \in
\mathcal S(E).$
\end{proof}

\section{Loomis--Sikorski Theorem for  $n$-State Effect Algebras}

In the present section we will formulate and prove the first main result of the paper.

Let $E$ be a monotone $\sigma$-complete effect algebra with (RDP).
By Theorem \ref{th:5.2}, $\widehat E=A(E)$ but $\widehat E$ is not
necessarily an effect-tribe. Let $\mathcal T(E)$ be the effect-tribe
of functions from $[0,1]^{ \mathcal S(E)}$ generated by $\widehat
E=A(E).$

\begin{Proposition}\label{pr:6.1}
Let $E$ be a monotone $\sigma$-complete effect algebra with {\rm
(RDP)} and let $g$ be an affine continuous function on $\mathcal
S(E)$ into itself such that $g^n= g$ and $g(s)\in s(E)$ for any
discrete $s \in \partial_e\mathcal S(E).$  Then the operator
$\mathcal T_g$ defined on $\mathcal T(E)$ by $\mathcal T_g(f)=f\circ
g,$ $f \in \mathcal T(E),$ is a monotone $\sigma$-complete
$n$-state-operator on $\mathcal T(E)$ and is the unique extension of
the monotone $\sigma$-complete $n$-state-operator $\tau_g$ on $A(E)$
defined by $\tau_g(f)=f\circ g,$ $f \in A(E).$
\end{Proposition}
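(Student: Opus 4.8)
The plan is to exploit the fact that precomposition with $g$ is compatible with every operation used to build the effect-tribe $\mathcal{T}(E)$, and then to run two generation (monotone-class type) arguments: one to show $\mathcal{T}_g$ maps $\mathcal{T}(E)$ into itself, and one to establish uniqueness. First I would record that the precomposition map $f\mapsto f\circ g$ on $[0,1]^{\mathcal{S}(E)}$ preserves all the structure of an effect-tribe: it fixes the constant function $1$, satisfies $(1-f)\circ g=1-(f\circ g)$, sends a defined pointwise sum $f+h$ to the defined sum $(f\circ g)+(h\circ g)$ (the bound $\le 1$ being preserved pointwise), and carries an increasing pointwise limit $f_n\nearrow f$ to the increasing pointwise limit $f_n\circ g\nearrow f\circ g$. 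These verifications are immediate and use nothing about $g$ beyond its being a map $\mathcal{S}(E)\to\mathcal{S}(E)$.

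For well-definedness I would consider $\mathcal{D}=\{f\in\mathcal{T}(E): f\circ g\in\mathcal{T}(E)\}$. By Proposition \ref{pr:5.1} we already know $\tau_g$ maps $A(E)$ into $A(E)\subseteq\mathcal{T}(E)$, so $A(E)\subseteq\mathcal{D}$. The compatibility facts just noted show that $\mathcal{D}$ is closed under complementation, defined partial sums, and increasing pointwise limits, i.e. $\mathcal{D}$ is an effect-subtribe of $\mathcal{T}(E)$. Since $\mathcal{T}(E)$ is by definition the smallest effect-tribe containing $A(E)$, it follows that $\mathcal{D}=\mathcal{T}(E)$, so $\mathcal{T}_g$ is a well-defined self-map of $\mathcal{T}(E)$. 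The same compatibility facts make $\mathcal{T}_g$ an endomorphism, and $n$-potency is immediate: $\mathcal{T}_g^n(f)=f\circ g^n=f\circ g=\mathcal{T}_g(f)$, since $g^n=g$.

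Monotone $\sigma$-completeness rests on the observation that in any effect-tribe the supremum of an increasing bounded sequence is its pointwise limit: the order is the pointwise order, and condition (iv) in the definition of an effect-tribe supplies that limit as the least upper bound. Thus if $a_i\nearrow a$ in $\mathcal{T}(E)$, then $a(s)=\lim_i a_i(s)$ pointwise, whence $\mathcal{T}_g(a)(s)=a(g(s))=\lim_i a_i(g(s))=\lim_i\mathcal{T}_g(a_i)(s)$, giving $\mathcal{T}_g(a)=\bigvee_i\mathcal{T}_g(a_i)$. Moreover $\mathcal{T}_g$ restricts to $\tau_g$ on $A(E)$ by construction, so it is an extension of $\tau_g$.

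For uniqueness I would let $\sigma$ be any monotone $\sigma$-complete $n$-state-operator on $\mathcal{T}(E)$ agreeing with $\tau_g$ on $A(E)$ and consider $\mathcal{D}'=\{f\in\mathcal{T}(E):\sigma(f)=\mathcal{T}_g(f)\}$. Because both $\sigma$ and $\mathcal{T}_g$ are endomorphisms preserving $'$ and $+$, and both preserve increasing countable suprema, $\mathcal{D}'$ is again an effect-subtribe containing $A(E)$, hence equals $\mathcal{T}(E)$, so $\sigma=\mathcal{T}_g$. I expect the only point requiring care to be the pointwise computation of suprema in the tribe, since it is exactly what lets the limit-closure step in both generation arguments go through cleanly.
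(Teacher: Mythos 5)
Your proposal is correct and follows essentially the same route as the paper: the paper likewise proves well-definedness by showing that $\{f\in\mathcal T(E): f\circ g\in\mathcal T(E)\}$ is an effect-tribe containing $A(E)$ (hence all of $\mathcal T(E)$), and proves uniqueness by showing that the agreement set of any two monotone $\sigma$-complete extensions of $\tau_g$ is again a tribe containing $A(E)$. Your explicit justification that order-theoretic suprema of increasing sequences in an effect-tribe coincide with pointwise limits is a detail the paper leaves implicit, and it is exactly the point that makes both generation arguments close under the limit step.
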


\begin{proof}
First of all we show that $\mathcal T_g$ is a well-defined operator
on $\mathcal T(E),$ that is, if $f \in \mathcal T(E),$ then $f\circ
g \in \mathcal T(E).$  Let $\mathcal T'$ be the set of all $f\in
\mathcal T(E)$ such that $f\circ g \in \mathcal T(E).$ Then
$\mathcal T'$ contains  $A(E)=\widehat E$ and if $f\in \mathcal T',$
then $1-f\in \mathcal T'.$ Now let $f_1,f_2 \in \mathcal T'$ be such
that $f_1\le f_2',$ then $f_1 + f_2$  belongs to $\mathcal T'.$
Hence, if $\{f_i\}$ is a sequence of monotone functions from
$\mathcal T',$ then, for $f=\lim_i f_i,$ we have $f\circ g = \lim_i
f_i\circ g\in \mathcal T'$. This implies that $\mathcal T'$ is an
effect-tribe generated by $A(E)$, consequently, $\mathcal
T'=\mathcal T(E)$ and $\mathcal T_g$ is a monotone $\sigma$-complete
$n$-state-operator on $\mathcal T(E)$ that is an extension of
$\tau_g.$

Now if $\tau$ is any monotone  $\sigma$-complete $n$-state-operator
on $\mathcal T(E)$ that is an extension of $\tau_g,$ then again the
set of elements $f \in \mathcal T(E)$ such that $\tau(f)=\mathcal
T_g(f)$ is a tribe containing $A(E).$ Thus, it has to be $\mathcal
T(E)$ and $\tau=\mathcal T_g.$
\end{proof}

Let $(E_1,\tau_1)$ and $(E_2,\tau_2)$ be $n$-state effect algebras.
A homomorphism $h:E_1 \to E_2$ is said to be a {\it
state-homomorphism} if $h\circ \tau_1 = \tau_2 \circ h.$ Similarly,
we define  a {\it monotone} $\sigma$-{\it complete
state-homomorphism} if $(E_1,\tau_1)$ and $(E_2,\tau_2)$ are
monotone $\sigma$-complete state effect algebras and $h$ is a
state-homomorphism such that if $a_i \nearrow a,$ then $h(a_i)
\nearrow h(a).$

We now generalize the Loomis--Sikorski Theorem for monotone
$\sigma$-complete $n$-state effect algebras.

\begin{Theorem}[Loomis--Sikorski Theorem]\label{th:6.2}
Let $(E,\tau)$ be a monotone $\sigma$-complete $n$-state effect
algebra with {\rm (RDP)}. Then there are a monotone
$\sigma$-complete $n$-state effect algebra $(\mathcal T,\mathcal
T_g),$ where $\mathcal T$ is an effect-tribe of functions from
$[0,1]^\Omega$ satisfying {\rm (RDP)}, a function $g:\Omega \to
\Omega$ such that $g^n = g$ and $f\circ g\in \mathcal T$ for any $f
\in \mathcal T,$ such that $\mathcal T_g(f):=f\circ g,$ $f\in
\mathcal T,$ is a monotone $\sigma$-complete $n$-state-operator on
$\mathcal T.$ Moreover, there is a monotone $\sigma$-complete
state-homomorphism $h$ from $\mathcal T$ onto $E$ such that $h\circ
\mathcal T_g= \tau\circ h.$
\end{Theorem}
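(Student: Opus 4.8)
The plan is to assemble the statement from the machinery already developed for $\widehat E=A(E)$ and its enlargement to the tribe $\mathcal T(E)$. By Theorem~\ref{th:5.2}, since $E$ is a nontrivial monotone $\sigma$-complete effect algebra with {\rm (RDP)}, we have an isomorphism $E\cong A(E)=\widehat E$ via $a\mapsto \hat a$, so I may identify $E$ with $\widehat E$ throughout. Under this identification, Theorem~\ref{th:5.3} produces the affine continuous function $g:\mathcal S(E)\to\mathcal S(E)$ with $g^n=g$ and $g(s)(E)\subseteq s(E)$ for every discrete extremal state $s$, and it realizes the given operator $\tau$ as $\widehat\tau(\hat a)=\hat a\circ g$. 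Thus I set $\Omega:=\mathcal S(E)$, and $g$ is exactly the function required in the statement.

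Next I would invoke Proposition~\ref{pr:6.1}: taking $\mathcal T:=\mathcal T(E)$, the effect-tribe of functions in $[0,1]^{\mathcal S(E)}$ generated by $\widehat E=A(E)$, the operator $\mathcal T_g(f):=f\circ g$ is a well-defined monotone $\sigma$-complete $n$-state-operator on $\mathcal T$ that extends $\tau_g$ on $A(E)$. This gives the $n$-state effect algebra $(\mathcal T,\mathcal T_g)$ of the conclusion, and since $\mathcal T$ is generated by $A(E)=\widehat E$ which satisfies {\rm (RDP)}, and the tribe-generating operations (pointwise $+$, complementation, and monotone pointwise limits) preserve {\rm (RDP)}, the tribe $\mathcal T$ satisfies {\rm (RDP)}. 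The condition $f\circ g\in\mathcal T$ for all $f\in\mathcal T$ is precisely the well-definedness established in the proof of Proposition~\ref{pr:6.1}.

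It remains to produce the surjective monotone $\sigma$-complete state-homomorphism $h:\mathcal T\to E$ intertwining the two operators. Here I would use the standard Loomis--Sikorski evaluation map already implicit in the earlier (non-state) version \cite{BCD}: restriction to $A(E)=\widehat E$ is the inverse of the isomorphism $a\mapsto\hat a$, so define $h$ on $\mathcal T$ by showing that every $f\in\mathcal T$ agrees, off a meager subset of $\partial_e\mathcal S(E)$, with a unique element $\hat a\in\widehat E$, and set $h(f):=a$. The existence and uniqueness of this $a$ is exactly the content of the Loomis--Sikorski representation for monotone $\sigma$-complete effect algebras with {\rm (RDP)}, using Lemma~\ref{le:4.1} and the Baire Category Theorem to control the meager discrepancy sets; this makes $h$ a well-defined homomorphism onto $E$, and the monotone $\sigma$-completeness of $h$ follows because both $\mathcal T$ and $E$ are monotone $\sigma$-complete and suprema in $\mathcal T$ are pointwise. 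The compatibility $h\circ\mathcal T_g=\tau\circ h$ is then a direct computation: for $f\in\mathcal T$ representing $a\in E$, the function $f\circ g$ represents $\tau(a)$ because on $\widehat E$ we have $\widehat\tau(\hat a)=\hat a\circ g=\mathcal T_g(\hat a)$, and this identity propagates through the meager-modulo identification.

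I expect the main obstacle to be the construction and well-definedness of $h$ on all of $\mathcal T$, rather than just on $A(E)$: one must verify that the ``meager-modulo'' equivalence descending from $\mathcal T$ to $\widehat E=A(E)$ is compatible with the tribe operations and with the monotone limits that generate $\mathcal T$, so that $h$ respects $+$, complements, and ascending suprema simultaneously. This is where Lemma~\ref{le:4.1} does the real work, exactly as in the state-free Loomis--Sikorski theorem of \cite{BCD}; the $n$-state structure adds only the routine verification of the intertwining relation, which reduces to the already-established identity $\mathcal T_g|_{A(E)}=\tau_g=\widehat\tau$ together with the naturality of $h$.
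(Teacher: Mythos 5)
Your overall architecture matches the paper's: identify $E$ with $\widehat E=A(E)$ via Theorem~\ref{th:5.2}, obtain the affine continuous $g(s)=s\circ\tau$ with $g^n=g$ from Theorem~\ref{th:5.3}, get the operator $\mathcal T_g$ from Proposition~\ref{pr:6.1}, and define $h$ by matching each $f$ in the tribe with the unique $a\in E$ for which $N(f-\hat a)$ is meager. Your account of where Lemma~\ref{le:4.1} and the Baire Category Theorem do the work (uniqueness of the representative, closure under monotone limits, $\sigma$-completeness of $h$) and of the intertwining $h\circ\mathcal T_g=\tau\circ h$ via $N(f\circ g-\hat a\circ g)=g^{-1}(N(f-\hat a))$ is faithful to the paper.

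There is, however, one genuine gap: the justification that $\mathcal T$ satisfies {\rm (RDP)}. You assert that ``the tribe-generating operations (pointwise $+$, complementation, and monotone pointwise limits) preserve {\rm (RDP)},'' so that the tribe generated by the {\rm (RDP)} effect-clan $A(E)$ again has {\rm (RDP)}. This is not a legitimate inference: {\rm (RDP)} is a global decomposition property of the partial algebra, and there is no general principle that closing an effect-clan under pointwise monotone limits preserves it. The paper proves {\rm (RDP)} for $\mathcal T$ by a different and essential argument: given $f\le g+h$ in $\mathcal T$, one first passes to the representatives $a,b,c\in E$ and shows $a\le b+c$ --- this order-reflection step itself requires the Baire Category Theorem together with Archimedeanity of $\Aff(\mathcal S(E))$ to conclude that agreement off a meager set forces the inequality in $E$; one then applies {\rm (RDP)} inside $E$ to obtain $a=b_1+c_1$ with $b_1\le b$ and $c_1\le c$, and finally patches: the functions $g_1,h_1$ are defined to equal $\hat b_1,\hat c_1$ off the exceptional meager set $K$ and to equal a trivial pointwise decomposition of $f|_K$ on $K$. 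Without this meager-set patching argument (or an explicit appeal to the corresponding step of \cite{BCD}), the {\rm (RDP)} claim in your proof is unsupported; the rest of your proposal would go through once this step is repaired.
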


\begin{proof}
Let $E$ be a monotone  $\sigma$-complete effect algebra with a
monotone  $\sigma$-complete $n$-state-operator $\tau.$  We
isomorphically embed $E$ onto $\widehat E.$ We set $\Omega =\mathcal
S(E),$ then $\Omega$ is a compact Hausdorff topological space and
$\widehat E = A(E)$ by Theorem \ref{th:5.2}.  Let $\mathcal T(E)$ be
the effect-tribe of functions from $[0,1]^\Omega$ that is generated
by $\widehat E.$ According to Theorem \ref{th:5.3}, the function
$g:\mathcal S(E) \to \mathcal S(E)$ defined by $g(s)=s\circ \tau,$
$s \in \mathcal S(E),$ is continuous and $g^n = g.$  The mapping
$\mathcal T_g: \mathcal T(E)\to \mathcal T(E)$ defined by $\mathcal
T_g(f)=f\circ g,$ $f \in \mathcal T(E),$ is by Proposition
\ref{pr:6.1} a monotone $\sigma$-complete $n$-state-operator on
$\mathcal T(E)$ and by the same Proposition, it is a unique
extension of the monotone $\sigma$-complete $n$-state-operator
$\tau_g$ on $\mathcal E$ defined by $\tau_g(\hat a)= \hat a\circ g,$
$a\in E.$

Let us denote by ${\mathcal T}$ the class of all functions $f \in
[0,1]^{\mathcal S(E)}$ such that there is an element $a \in E$ with
a meager set $N(f-\hat a):=\{s\in
\partial_e \mathcal S(E): f(s)\ne \hat a(s)\}$;
in which case we write  $f\sim a.$

If $a_1$ and $a_2$ are two elements of $E$ such that $f\sim  a_1$
and $f\sim  a_2, $ then $N(\hat a_1-\hat a_2)\subseteq N(f-\hat
a_1)\cup N(f-\hat a_2)$ proving that $N(\hat a_1-\hat a_2)$ is a
meager set. $\hat a_1$ and $\hat a_2$ are continuous functions, and
due to the Baire Category Theorem, we have $\hat a_1=\hat a_2.$

Therefore, the mapping $h:\mathcal T\to E$ defined by $h(f)=a$ if
$f\sim a$ is a well-defined mapping.

In what follows, we show that $\mathcal T$ is an effect-clan with
(RDP) such that $\mathcal T=\mathcal T(E)$ and $h$ is a monotone
$\sigma$-complete $n$-state-homomorphism on $A(E).$

Let $f_1$ and $f_2$ be  two functions from ${\mathcal T}$ with $f_1
\le f_2$. Choose $b_1, b_2 \in E$ such that $f_i\sim b_i$ for
$i=1,2$. We assert that $b_1 \le b_2$.

Indeed, we have $\{s \in \partial_e {\mathcal S}(E):\ 0< \hat b_1(s)
- \hat b_2(s) \} \subseteq N(f_1 - \hat b_1) \cup N(f_2 - \hat
b_2)$.

The Baire Category  Theorem applied to $\partial_e {\mathcal S}(E)$
implies that no nonempty open set of $\partial_e {\mathcal S}(E)$
can be a meager set, whence $s(b_1) \le s(b_2)$ for any $s \in
\partial_e {\mathcal S}(E)$, and consequently $\hat s(b_2 - b_1)\ge 0$
for any $s \in {\mathcal S}(E).$  Because our $\Aff(\partial_e \mathcal
S(E),1)$   is Archimedean,   this yields that the set $\partial_e
\mathcal S(E)$ is order determining that entails $b_1 \le b_2.$

It is clear that the set ${\mathcal T}$ is closed under the
formation of complements $f \mapsto 1-f$, and   it  contains $\{\hat
a:\ a \in E\}.$

If $f,g \in {\mathcal T}$, $f\le 1- g$, and $N(f-\hat a), N(g-\hat
b)$ are meager subsets of $\partial_e {\mathcal S}(E)$, then  $a \le
b'$, so that $a+b \in E$. Hence, $N(f+g - \widehat{(a+b)})$ is
meager, i.e., $f+g \in {\mathcal T}$, and ${\mathcal T}$ is an
effect-clan.

To show that ${\mathcal T}$ is an effect-tribe it is necessary to
verify that ${\mathcal T}$ is closed under limits of non-decreasing
sequences from ${\mathcal T}.$  Let $\{f_i\}$ be a non-decreasing
sequence of elements from ${\mathcal T}$. For any $f_i$, choose by
(i) a unique $b_i \in E$ such that $N(f_i - \hat b_i)$ is a meager
subset of $\partial_e {\mathcal S}(E)$.

Denote by $f = \lim_i f_i,$ $b = \bigvee_{i=1}^\infty b_i$, $ b_0 =
\lim_i \hat b_i.$ Then  $b \in E$ and $\hat b \in {\mathcal T}.$ We
have
$$ N(f - \hat b) \subseteq N(f - b_0) \cup N(\hat b -  b_0)
$$
and $N(f - b_0) = \{s\in \partial_e {\mathcal S}(E):\ f(s) <
b_0(s)\} \cup \{s\in \partial_e {\mathcal S}(E):\ b_0(s) < f(s)\}$.

If $s \in  \{s\in \partial_e {\mathcal S}(E):\ f(s) < b_0(s)\},$
then there is an integer $i \ge 1$ such that $f(s) < \hat b_i(s) \le
b_0(s).$ Hence $f_i(s) \le f(s) < \hat b_i(s) \le b_0(s)$ so that $s
\in \{s\in \partial_e {\mathcal S}(E):\ f_i(s) < \hat b_i(s)\}$.

Similarly we can prove that if $s\in \{s\in \partial_e {\mathcal
S}(E):\ b_0(s) < f(s)\}$, then there is an integer $i \ge 1$ such
that $s\in \{s\in \partial_e {\mathcal S}(E):\ \hat b_i(s) <
f_i(s)\}.$

These  two cases yield
$$ N(f - b_0) \subseteq \bigcup_{i=1}^\infty N(\hat b_i - f_i)
$$
which is a meager subset of $\partial_e {\mathcal S}(E)$.

Since $b = \bigvee_{i=1}^\infty b_i,$ we conclude that $\bigwedge_i
b_i' = b'$ and $\bigwedge_i (b_i' - b') = 0$. Due to Lemma
\ref{le:4.1}, we have that $N(\hat b - b_0)$ is a meager subset of
$\partial_e {\mathcal S}(E)$.  Hence, $f \in {\mathcal T}.$

Consequently, we have proved that ${\mathcal T}$ is an effect-tribe.
Since $\mathcal T$ contains $A(E),$ we have $\mathcal T=\mathcal T(E).$

Now we concentrate to show  that $\mathcal T$ satisfies (RDP). Let
$f \le g+h$, i.e., $f(s) \le g(s) +h(s)$ for any $s\in {\mathcal
S}(E)$. By (ii) and (i) there are unique  elements $a,b,c \in E$
such that $f \sim a,$ $g \sim b$ and $h\sim c$, and $a \le b+c$.
Since $E$ satisfies (RDP), there are two elements $b_1,c_1 \in E$
such that $a = b_1 +c_1$ and $b_1 \le b$ and $c_1 \le c$.
Consequently, there is a meager set $K$ of $\partial_e {\mathcal
S}(E)$ such that $f(s) =\hat a(s)$, $g(s) = \hat b(s)$ and $h(s) =
\hat c(s)$ for any $s \in {\mathcal S}(E) \setminus K.$ For the
functions $f|_K,$ $g|_K$ and $h|_K$ defined on $K$ (i.e. the system
of all $[0,1]$-valued functions on $K$), (RDP) trivially holds,
i.e., there are two functions $g_0$ and $h_0$ defined on $K$ such
that $f(s) = g_0(s) +h_0(s),$ $g_0(s) \le g(s)$ and $h_0(s) \le
h(s)$ for any $s \in K$. Let us define functions $g_1$ and $h_1$ on
${\mathcal S}(E)$ by $g_1(s) = \hat b_1(s)$, $h_1(s) = \hat c_1(s)$
for any $s \in {\mathcal S}(E)\setminus K$ and $ g_1(s) = g_0(s)$,
$h_1(s) = h_0(s)$ for any $s \in K$. Then $f= g_1 +h_1$, $g_1 \le g$
and $h_1 \le h,$ and $g_1 \sim a_1$ and $h_1 \sim b_1$, which proves
that ${\mathcal T}$ has (RDP).

Due to the definition of $\mathcal T$ and the previous steps, the
mapping $h:\, {\mathcal T} \to E$ defined by $h(f) = b$ iff $f \sim
b$ is a surjective and monotone $\sigma$-complete homomorphism from
$\mathcal T$ onto $E.$

Finally,  let  $f\in \mathcal T$ and $a\in E$ be such $h(f)=a.$ Then
$f\sim a$ so that $N(f-\hat a)$ is a meager set. Then $N(f\circ g -
\hat a\circ g)= g^{-1}(N(f-\hat a))$ is also meager.  By Theorem
\ref{th:5.3}, we have $h(\mathcal T_g (f))=\tau (a)= \tau(h(f)).$
\end{proof}

\section{Stone Dualities and F-spaces}

We present the second main result of the paper, Stone Dualities
between some categories of effect algebras and F-spaces, Theorem
\ref{th:7.10}.

We say that a topological space $\Omega$ is an {\it F-space} if any
two disjoint open $F_\sigma$ subsets of $\Omega$ have disjoint
closures.  For example, every basically disconnected compact Hausdorff
space is an F-space.

We say that a poset $E$ satisfies the {\it countable interpolation
property} provided that for any two sequences $\{x_i\}$ and
$\{y_j\}$ of elements of $E$ such that $x_i \le y_j$ for all $i,j,$
there exists an element $z \in E$ such that $x_i \le z \le y_j$ for
all $i,j.$

\begin{Theorem}\label{th:7.1}  Let $\Omega$ be a Bauer simplex. The
following statements are equivalent.

\begin{enumerate}

\item[{\rm (i)}] $\Gamma(\Aff(\Omega),1)$ is a weakly divisible
effect algebra with countable interpolation.

\item[{\rm (ii)}] $\Gamma(\mbox{\rm C}(\partial_e\Omega),1)$ is a weakly divisible
effect algebra with countable interpolation

\item[{\rm (iii)}] $\partial_e\Omega$ is an F-space.

\end{enumerate}
\end{Theorem}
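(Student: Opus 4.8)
The plan is to split the equivalence into a functional-analytic identification $(i)\Leftrightarrow(ii)$ and a topological core $(ii)\Leftrightarrow(iii)$. For the first I would exhibit a concrete isomorphism of effect algebras. Since $\Omega$ is a Bauer simplex, $\partial_e\Omega$ is compact, and the restriction map $\rho\colon \Aff(\Omega)\to \mathrm C(\partial_e\Omega)$, $\rho(f)=f|_{\partial_e\Omega}$, is an isomorphism of $\ell$-groups carrying $1$ to $1$: it is injective because an affine continuous function is determined by its values on the extreme boundary (every point of $\Omega$ is the barycentre of a boundary measure), and surjective by Bauer's extension theorem, that on a Bauer simplex every continuous function on $\partial_e\Omega$ extends uniquely to an element of $\Aff(\Omega)$, see \cite{Alf}; compare also \cite[Thm 11.21]{Goo}, which gives that $\Aff(\Omega)$ is an $\ell$-group. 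Restricting $\rho$ to unit intervals yields an isomorphism of effect algebras $\Gamma(\Aff(\Omega),1)\cong\Gamma(\mathrm C(\partial_e\Omega),1)$, and since weak divisibility and countable interpolation are invariants of effect-algebra isomorphism, $(i)\Leftrightarrow(ii)$.

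Write $K=\partial_e\Omega$, a compact Hausdorff, hence normal, space. It remains to show that $\Gamma(\mathrm C(K),1)$ is weakly divisible with countable interpolation iff $K$ is an F-space, and here I would first discard the side conditions. Weak divisibility is automatic, since the constant $1/n$ gives $n\cdot(1/n)=1$. Moreover countable interpolation in the effect algebra $\Gamma(\mathrm C(K),1)$ is equivalent to countable interpolation in the $\ell$-group $\mathrm C(K)$: given $a_i\le b_j$ in $\mathrm C(K)$, the finite joins $a_i'=a_1\vee\cdots\vee a_i$ and meets $b_j'=b_1\wedge\cdots\wedge b_j$ still satisfy $a_i'\le b_j'$, have the same interpolants as the original families, and lie in the order interval $[a_1,b_1]$; as $1$ is a strong unit one may translate and rescale $[a_1,b_1]$ into $[0,1]$ and apply effect-algebra interpolation, obtaining an interpolant in $\mathrm C(K)$ after undoing the scaling. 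Thus the problem reduces to: $\mathrm C(K)$ has countable interpolation iff $K$ is an F-space.

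For $(ii)\Rightarrow(iii)$ I would argue directly. Let $U,V$ be disjoint open $F_\sigma$ subsets of $K$; since $K$ is normal they are cozero sets, say $U=\{f\neq0\}$, $V=\{g\neq0\}$ with $0\le f,g\le1$. Put $f_n=\min(nf,1)$ and $g_m=\min(mg,1)$; these increase, and because $U\cap V=\emptyset$ forces $f_n g_m=0$ pointwise, one checks $f_n\le 1-g_m$ for all $n,m$. Countable interpolation yields $h\in\Gamma(\mathrm C(K),1)$ with $f_n\le h\le 1-g_m$. Since $\sup_n f_n=1$ on $U$ and $\sup_m g_m=1$ on $V$, while $0\le h\le 1$, continuity forces $h\equiv1$ on $\overline U$ and $h\equiv0$ on $\overline V$, so $\overline U\cap\overline V=\emptyset$ and $K$ is an F-space.

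The converse $(iii)\Rightarrow(ii)$ is the main obstacle. After replacing the families by $f_n=\bigvee_{k\le n}f_k$ and $g_m=\bigwedge_{k\le m}g_k$ I must insert a continuous $h$ between the lower-semicontinuous $f=\sup_n f_n$ and the upper-semicontinuous $g=\inf_m g_m$, where $f\le g$; then $f_n\le h\le g_m$ for all $n,m$. This is a continuous-insertion problem of the delicate \emph{lower-below-upper} type, which is exactly where the F-space hypothesis is genuinely needed. The key observation is that for rationals $r<s$ the sets $\{f>s\}=\bigcup_n\{f_n>s\}$ and $\{g<r\}=\bigcup_m\{g_m<r\}$ are cozero sets, and $f\le g$ makes them disjoint, so the F-space property gives disjoint closures. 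I would feed these disjoint-closure relations into a dyadic Urysohn-type construction, producing a nested family of open sets $\{G_r\}$ with $\overline{G_r}\subseteq G_s$ for $r<s$ that separates the level sets of $f$ from those of $g$, and then define $h$ from this family. The disjoint closures furnished by the F-space property are precisely the normality-type input that allows the nesting to be carried out; verifying that the resulting dyadic scheme converges uniformly to a continuous $h$ with $f\le h\le g$ is the technical crux. (Alternatively one may invoke the known equivalence between the F-space property and countable interpolation in $\mathrm C(K)$.)
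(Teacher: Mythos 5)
Your proposal is correct, and it reorganizes the argument in a way that differs from the paper at two points. For (i)$\Leftrightarrow$(ii) you package everything into the single claim that restriction $\rho:\Aff(\Omega)\to \mbox{C}(\partial_e\Omega)$ is an isomorphism of unital $\ell$-groups; the paper instead runs the two implications separately, extending interpolants from the boundary by the Tietze-type theorem \cite[Prop II.3.13]{Alf} and transferring the order back to $\Omega$ via \cite[Cor 5.20]{Goo}. These are the same underlying facts (an affine continuous function on a Bauer simplex is determined by, and order-determined by, its boundary restriction, and every $f\in\mbox{C}(\partial_e\Omega)$ extends affinely), so your version is a legitimate and arguably cleaner repackaging. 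Your reduction of countable interpolation in the effect algebra $\Gamma(\mbox{C}(K),1)$ to countable interpolation in the $\ell$-group $\mbox{C}(K)$ by forming the monotone envelopes, translating and rescaling into $[0,1]$ is exactly the content of \cite[Prop 16.3]{Goo}, which the paper simply cites. The real divergence is at (ii)$\Leftrightarrow$(iii): the paper disposes of this entirely by citing Seever's theorem \cite[Thm 1.1]{See}, whereas you attempt to prove it. Your direct argument for ``countable interpolation $\Rightarrow$ F-space'' (cozero representations $U=\{f\ne 0\}$, $V=\{g\ne 0\}$, truncations $f_n=\min(nf,1)\le 1-\min(mg,1)=1-g_m$, and the interpolant forced to be $1$ on $\overline U$ and $0$ on $\overline V$) is complete and correct, and is a nice self-contained addition. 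The converse ``F-space $\Rightarrow$ countable interpolation'' is only sketched: the dyadic insertion scheme between the lower-semicontinuous $\sup_n f_n$ and the upper-semicontinuous $\inf_m g_m$ is the genuinely hard part of Seever's theorem, and you explicitly leave its convergence unverified. Since you also offer the fallback of invoking the known equivalence --- which is precisely the paper's move --- the overall proof stands, but as written the direct route for that one implication is a sketch rather than a proof, so you should either carry out the insertion argument in full or commit to the citation.
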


\begin{proof}
It is clear that both $\Gamma(\Aff(\Omega),1)$ and
$\Gamma(\mbox{C}(\Omega),1)$ are weakly divisible as well as divisible
effect algebras.

(i) $\Rightarrow$ (ii). Assume that $\{f_n\}$ and $\{g_m\}$ are two
sequences of continuous functions from
$\Gamma(\mbox{C}(\partial_e\Omega),1)$ such that $f_n \le g_m$.  Let
$\tilde f_n$ and $\tilde g_m$ be unique extensions to affine
continuous functions on $\Omega$ of $f_n$ and $g_m,$ respectively.
Then $\tilde f_n \le \tilde g_m$ for all $n,m.$  The countable
interpolation on $\Gamma(\Aff(\Omega),1)$ yields that there is an
affine function $h \in \Gamma(\Aff(\Omega),1)$ such that $\tilde f_n
\le h \le \tilde g_m$ for all $n,m.$  If $h_0$ is the restriction of
$h$ onto $\partial_e \Omega,$ then $f_n \le h_0 \le g_m$ for all
$n,m$ so that $\Gamma(\mbox{C}(\partial_e\Omega),1)$ satisfies countable
interpolation.

(ii) $\Rightarrow$ (i).  Since $\Omega$ is a Bauer  simplex, and
consequently a Choquet one, $\Aff(\Omega)$ is an interpolation
group, \cite[Thm 11.4]{Goo}. Let $\{f_n\}$ and $\{g_m\}$ be two
sequences of continuous affine functions from
$\Gamma(\Aff(\Omega),1)$ such that $f_n \le g_m$ for each $n,m.$
Since the functions are continuous, there is a continuous function
$h \in \mbox{C}(\partial_e\Omega)$ such that $f_n(x)\le h(x)\le g_m(x)$ for
all $n,m$ and $x\in \partial_e \Omega.$ By the Tietze Theorem,
\cite[Prop II.3.13]{Alf}, $h$ can be uniquely extended to an affine
function $\tilde h.$  Since $f_n(x)\le \tilde h(x) \le g_m(x)$ for
all $x\in
\partial_e \Omega,$ by \cite[Cor 5.20]{Goo}, this implies $f_n(x)\le
\tilde h(x)\le g_m(x)$ for any $x \in \Omega.$

(ii) $\Leftrightarrow$ (iii)  According to \cite[Thm 1.1]{See}, a
compact Hausdorff topological space $K$ is an F-space iff $\mbox{C}(K)$
satisfies countable interpolation. By \cite[Prop 16.3]{Goo},
$\Gamma(\mbox{C}(\partial_e\Omega),1)$ satisfies countable interpolation
iff $(\mbox{C}(\partial_e\Omega),1)$ satisfies countable interpolation.
\end{proof}

\begin{Remark}\label{re:7.2}
Here it is necessary to point out that not every MV-algebra
satisfying countable interpolation is $\sigma$-complete.  Due to the
Nakano Theorem \cite[Cor 9.3]{Goo}, if $\Omega$ is a compact
Hausdorff space, then the MV-algebra $\Gamma(\mbox{C}(\Omega),1)$ is
$\sigma$-complete iff $\Omega$ is basically disconnected. Every such
a basically disconnected space $\Omega$ can be expressed as a union
of two nonempty clopen subsets, so that $\Omega$ is not connected.
But due to \cite{GiHe} or \cite[p. 280]{Goo}, there exists an
F-space, $\Omega_0,$ that is connected, so that it is not basically
disconnected. By \cite{See}, $\Gamma(\mbox{C}(\Omega_0),1)$ is an
MV-algebra that satisfies countable interpolation, but due to the
Nakano Theorem it is not $\sigma$-complete.
\end{Remark}

If  $K$ is a compact Hausdorff topological space, let ${\mathcal
B}(K)$ be the Borel $\sigma$-algebra of $K$ generated by all open
subsets of $K.$  Let  ${\mathcal M}_1^+(K)$ denote the set of  all
probability measures, that is, all positive regular
$\sigma$-additive Borel measures $\mu$ on $\mathcal B(K).$  We recall
that a Borel measure $\mu$ is called regular if

$$\inf\{\mu(O):\ Y \subseteq O,\ O\ \mbox{open}\}=\mu(Y)
=\sup\{\mu(C):\ C \subseteq Y,\ C\ \mbox{closed}\}
$$
for any $Y \in {\mathcal B}(K).$

Let $x \in K$ and let $\delta_x$ be the Dirac measure concentrated
at the point $x \in K,$ i.e., $\delta_x(Y)= 1$ iff $x \in Y$,
otherwise $\delta_x(Y)=0;$ then every Dirac measure is a regular
Borel probability measure.   Moreover, \cite[Prop 5.24]{Goo},  the
mapping
$$
\epsilon: x\mapsto \delta_x \eqno(7.1)
$$
gives a homeomorphism of $K$ onto $\partial_e {\mathcal M}_1^+(K).$

Hence, if $K$ is an F-space that is connected, see Remark
\ref{re:7.2}, then $\Omega:= {\mathcal M}_1^+(K)$ gives a
Bauer simplex whose boundary $\partial_e \Omega$ is a connected
compact Hausdorff F-space.  Moreover, $\Gamma(\Aff(\Omega),1)$ gives
by Theorem \ref{th:7.1} a divisible lattice ordered effect algebra
satisfying monotone interpolation and (RDP) that has an order
determining system of states but $\Gamma(\Aff(\Omega),1)$ is not
monotone $\sigma$-complete, see \cite[Thm 4.2]{DDL3}.

\begin{Theorem}\label{th:7.4}  Let $E$ be an effect algebra with
{\rm  (RDP)} and with countable interpolation such that $E$ has an
order determining system of states. Then $E$ is isomorphic to
$A(E),$ where $A(E)$ is defined by {\rm (4.1)}, $E$ is lattice
ordered and $\partial_e \mathcal S(E)$ is an F-space.
\end{Theorem}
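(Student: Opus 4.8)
The plan is to pass from $E$ to the group/simplex picture, identify $E$ with a concrete effect-clan of affine functions on its state space, and then squeeze the lattice and the F-space properties out of countable interpolation. First I would set up the representation. Since $E$ has (RDP), Remark \ref{re:2.2} gives $E\cong\Gamma(G,u)$ for an interpolation unital po-group $(G,u)$ with $u$ a universal strong unit, and by \cite[Thm 10.17]{Goo} the state space $\mathcal S(E)$ is a Choquet simplex. Because $\mathcal S(E)$ is order determining, Remark \ref{re:2.3} shows $(G,u)$ is Archimedean, so the evaluation map $a\mapsto\hat a$ (with $\hat a(s)=s(a)$) is an order isomorphism of $E$ onto the effect-clan $\widehat E\subseteq\Gamma(\Aff(\mathcal S(E)),1)$ by Proposition \ref{pr:4.2}. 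From here on I identify $E$ with $\widehat E$.

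The step I expect to be the real obstacle is showing that $E$ is lattice ordered, equivalently (by \cite[Thm 11.21]{Goo}) that $\Aff(\mathcal S(E))$ is an $\ell$-group, i.e. that $\mathcal S(E)$ is actually a \emph{Bauer} simplex. Given $a,b\in E$, the family $U$ of common upper bounds is downward directed by (RDP), so the pointwise infimum $\alpha(s)=\inf_{c\in U}s(c)$ is affine (directedness turns the infimum additive over convex combinations) and upper semicontinuous, and it dominates $\hat a\vee\hat b$ pointwise. The difficulty is to realise $\alpha$ inside $\widehat E$. If one can extract a decreasing sequence $c_1\ge c_2\ge\cdots$ in $U$ with $\inf_n s(c_n)=\alpha(s)$ for every $s$, then applying countable interpolation to the two families $\{a,b\}$ and $\{c_n\}$ yields $c^{*}\in E$ with $a,b\le c^{*}\le c_n$ for all $n$; since $c^{*}\in U$ gives $\hat c^{*}\ge\alpha$ while $c^{*}\le c_n$ gives $\hat c^{*}\le\alpha$, we get $\hat c^{*}=\alpha$ and hence $c^{*}=a\vee b$. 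Producing such a countable cofinal-in-value family from countable interpolation alone -- rather than from metrizability or separability of the simplex, which are not available -- is the delicate point on which the whole lattice property rests, and this is where I expect the hardest work to lie. Once it is settled, $\mathcal S(E)$ is a Bauer simplex and $E$ is lattice ordered.

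With the Bauer property in hand, the F-space conclusion should follow formally. On a Bauer simplex the restriction map $\Aff(\mathcal S(E))\to C(\partial_e\mathcal S(E))$ is an isomorphism, by the Tietze extension theorem for Bauer simplices \cite[Prop II.3.13]{Alf} together with \cite[Cor 5.20]{Goo}. Hence the countable interpolation that $E\cong\widehat E$ carries can be transported across this isomorphism to $(C(\partial_e\mathcal S(E)),1)$ (the discrete extremal states of $E$, where the values are pinned, causing no obstruction because their prescribed values are always attainable), and then to $C(\partial_e\mathcal S(E))$ by \cite[Prop 16.3]{Goo}. Seever's theorem \cite[Thm 1.1]{See} then says precisely that $\partial_e\mathcal S(E)$ is an F-space.

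Finally I would prove $E\cong A(E)$. The inclusion $\widehat E\subseteq A(E)$ is immediate, since $\hat a(s)=s(a)\in s(E)$ for every $s$. For the converse take $f\in A(E)$; by the approximation scheme underlying Theorem \ref{th:5.2} (the Archimedean representation of \cite[Cor 16.15]{Goo}, where the constraint $f(s)\in s(E)$ at the discrete extremal states supplies the one-sided approximants that rigidity would otherwise forbid) one sandwiches $f$ between elements of $\widehat E$, choosing $a_n,b_m\in E$ with $\hat a_n\le f\le\hat b_m$ and $\|f-\hat a_n\|_\infty,\|f-\hat b_m\|_\infty\to 0$. Then $\hat a_n\le\hat b_m$, so order determinacy gives $a_n\le b_m$ in $E$, and countable interpolation produces $a\in E$ with $a_n\le a\le b_m$ for all $n,m$, whence $\hat a_n\le\hat a\le\hat b_m$ forces $\hat a=f$. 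Thus $A(E)\subseteq\widehat E$, and combining the three parts yields $E\cong\widehat E=A(E)$ with $E$ lattice ordered and $\partial_e\mathcal S(E)$ an F-space.
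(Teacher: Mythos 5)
There is a genuine gap, and you have in fact pointed at it yourself: the claim that $E$ is lattice ordered is left resting on the unproved assertion that from the downward directed set $U$ of common upper bounds of $a,b$ one can extract a decreasing sequence $\{c_n\}$ with $\inf_n s(c_n)=\inf_{c\in U}s(c)$ for \emph{every} state $s$. Nothing in the hypotheses supplies such a countable value-cofinal subfamily: $\mathcal S(E)$ need not be metrizable or separable, and a downward directed family of continuous affine functions on a general compact convex set has no reason to admit a countable subfamily with the same pointwise infimum. Since the entire lattice/Bauer conclusion (and with it your route to the F-space property) hangs on this step, the proposal does not constitute a proof. The paper does not attempt this construction at all: it observes that $(G,u)$ is an Archimedean interpolation group with order unit satisfying countable interpolation and then invokes Goodearl's Theorem 16.19(b), whose proof proceeds through norm-completeness (countable interpolation forces $(G,u)$ to be complete in the order-unit norm, and one then builds the supremum by an iterative norm-approximation argument), not through cofinal extraction inside $U$. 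If you want a self-contained argument you would have to reproduce that machinery; the directed-infimum approach as written does not close.

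Two secondary points. First, for the inclusion $A(E)\subseteq\widehat E$ you appeal to ``the approximation scheme underlying Theorem \ref{th:5.2}'', but that theorem (and \cite[Cor 16.15]{Goo} behind it) is stated for \emph{monotone $\sigma$-complete} effect algebras, which $E$ is not assumed to be; the correct tool is the countable-interpolation representation theorem \cite[Thm 16.14]{Goo}, which is what the paper cites, and producing the norm-approximants $\hat a_n\le f\le\hat b_m$ is precisely its content, not a formality. Second, your transport of countable interpolation from $A(E)$ to $\mathrm{C}(\partial_e\mathcal S(E))$ is not immediate when discrete extremal states are present, since the given sequences $f_n\le g_m$ in $\mathrm{C}(\partial_e\mathcal S(E))$ need not satisfy the value constraints defining $A(E)$; the paper bypasses this by citing \cite[Thm 16.22]{Goo} directly. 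Your overall setup (Archimedean representation onto $\widehat E$, Bauer $\Leftrightarrow$ $\ell$-group, Seever's theorem) matches the paper's skeleton, but the load-bearing steps are exactly the ones left open.
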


\begin{proof}
If $E$ has an order determining system of states, $\mathcal S,$ then
$\mathcal S(E)$ is also order determining. Moreover, $E=\Gamma(G,u)$
for some interpolation unital po-group $(G,u).$ In view of Remark
\ref{re:2.3}, $\mathcal S(G,u)$ is also order determining, so that
$G$ is Archimedean. Applying \cite[Thm 16.19(b)]{Goo}, we have that
$G$ is an $\ell$-group so that  $E$ is a lattice. Due to \cite[Thm
16.14]{Goo}, $E$ is isomorphic with $A(E)$. Hence, $\mathcal S(E)$
is a Bauer simplex and by \cite[Thm 16.22]{Goo}, $\partial_e
\mathcal S(E)$ is an F-space.
\end{proof}

\begin{Remark}\label{re:7.5}  Under the assumptions of Theorem
\ref{th:7.4}, we see that $E$ is in fact a semisimple MV-algebra
(equivalently this means that $\mathcal S(E)$ is order determining)
with countable interpolation whose boundary $\partial_e \mathcal
S(E)$ is an F-space, and vice-versa. Every semisimple MV-algebra
satisfying countable interpolation satisfies the condition of
Theorem \ref{th:7.4}.
\end{Remark}

Let $n\ge 1$ be a fixed integer. Let $\mathcal {DSMEA}_n$ be the
category of (weakly) divisible state-morphism effect algebras whose
objects are couples $(E,\tau),$ where $E$ is an effect algebra
satisfying (RDP) and countable interpolation with an order
determining system of states, and $\tau$ is an
$n$-state-morphism-operator on $E$; and a morphism from
$(E_1,\tau_1)$ to $(E_2,\tau_2)$ is any homomorphism $h: E_1 \to
E_2$ that preserves all existing meets and joins in $E_1$ such that
$h\circ \tau_1=\tau_2\circ h.$ We note that $\mathcal {DSMEA}_n$ is
a category.

Let $\mathcal {BSF}_n$ be the category of Bauer simplices whose
objects are pairs $(\Omega,g),$ where $\Omega\ne \emptyset$ is a
Bauer simplex  such that $\partial_e \Omega$ is an F-space, and $g:
\Omega \to \Omega$ is an affine continuous function such that $g^n =
g$, $g:\partial_e \Omega \to
\partial_e \Omega.$ Morphisms from $(\Omega_1,g_1)$ into
$(\Omega_2,g_2)$ are continuous affine  functions $p: \Omega_1 \to
\Omega_2$ such that $p:\partial_e \Omega_1 \to
\partial_e \Omega_2$ and $p \circ g_1= g_2\circ p.$  Then $\mathcal{BSF}_n$ is also
a category.

Now we reformulate the substantial part of Proposition \ref{pr:5.1}
for state-morphism-operators on lattice ordered  effect algebras
with (RDP) and with an ordering system of states.

\begin{Proposition}\label{pr:7.6}  Let $\tau$ be an $n$-state-morphism
on a lattice ordered effect algebra satisfying {\rm (RDP)} and
countable interpolation and with an ordering system of states. Then
$\tau$ satisfies {\rm (ESP)} and there is an affine continuous
function $g$ from $\mathcal S(E)$ into itself such that it maps
$\partial_e \mathcal S(E)$ into itself, $g^n = g,$ $g(s)(E)
\subseteq S(E)$ for any discrete state $s$ on $E.$ Moreover, the
mapping $\tau_g: A(E)\to A(E)$ defined by $\tau_g(f)=f\circ g,$ $f
\in A(E),$ where $A(E)$ is defined by $(4.1),$ is an
$n$-state-morphism-operator on $A(E).$ In addition, $(E,\tau)$ and
$(A(E),\tau_g)$ are isomorphic $n$-state-morphism effect algebras.
\end{Proposition}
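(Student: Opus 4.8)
The plan is to reduce everything to the concrete representation $E\cong A(E)$ supplied by Theorem \ref{th:7.4} and then to read the required properties of $g$ and $\tau_g$ off the boundary structure of the Bauer simplex $\mathcal S(E)$. First I would invoke Theorem \ref{th:7.4}: since $E$ has (RDP), countable interpolation and an order determining system of states, $E$ is lattice ordered, the natural map $\psi(a)=\hat a$ is an effect-algebra isomorphism $E\cong A(E)$, the state space $\mathcal S(E)$ is a Bauer simplex, and $\partial_e\mathcal S(E)$ is an F-space. By Theorem \ref{th:7.1}, $\Gamma(\Aff(\mathcal S(E)),1)$ is then weakly divisible, so it admits no discrete extremal state; hence the defining constraint in $(4.1)$ is vacuous and $A(E)=\Gamma(\Aff(\mathcal S(E)),1)$, which is a lattice ordered effect algebra.

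Next I would establish (ESP). Because $\tau$ preserves all existing joins and $E$ is lattice ordered, passing to negations gives $\tau(a\wedge b)=\tau(a)\wedge\tau(b)$, so $\tau$ preserves meets as well. If $s\in\partial_e\mathcal S(E)$, then by the extremal-state criterion for $\ell$-groups, \cite[Thm 12.18]{Goo}, $s(a\wedge b)=\min\{s(a),s(b)\}$; consequently $(s\circ\tau)(a\wedge b)=s(\tau(a)\wedge\tau(b))=\min\{(s\circ\tau)(a),(s\circ\tau)(b)\}$, so $s\circ\tau$ again satisfies the criterion and is extremal. This is (ESP). Since the function produced by Proposition \ref{pr:5.1} is $g(s)=s\circ\tau$, (ESP) says precisely that $g$ maps $\partial_e\mathcal S(E)$ into itself; the remaining properties of $g$, namely that it is affine, continuous, satisfies $g^n=g$, and obeys $g(s)(E)\subseteq s(E)$ on discrete states, are exactly those furnished by Proposition \ref{pr:5.1}, which also yields that $\tau_g(f)=f\circ g$ is an $n$-state-operator on $A(E)$.

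The main work, and the step I expect to be the genuine obstacle, is to upgrade $\tau_g$ from an $n$-state-operator to an $n$-state-morphism-operator, i.e. to show that $\tau_g$ preserves joins on $A(E)$. Here I would exploit that on a Bauer simplex the restriction map identifies $\Aff(\mathcal S(E))$ with $\mbox{C}(\partial_e\mathcal S(E))$ as $\ell$-groups (\cite[Thm 11.21]{Goo} and its surrounding discussion), so that for $f_1,f_2\in A(E)$ the effect-algebra join $f_1\vee f_2$ restricted to $\partial_e\mathcal S(E)$ is the pointwise maximum $\max\{f_1,f_2\}$. Since $g$ carries $\partial_e\mathcal S(E)$ into itself, for every $s\in\partial_e\mathcal S(E)$ we obtain $\tau_g(f_1\vee f_2)(s)=(f_1\vee f_2)(g(s))=\max\{f_1(g(s)),f_2(g(s))\}=(\tau_g(f_1)\vee\tau_g(f_2))(s)$. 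Thus the two affine continuous functions $\tau_g(f_1\vee f_2)$ and $\tau_g(f_1)\vee\tau_g(f_2)$ agree on $\partial_e\mathcal S(E)$, and by \cite[Cor 5.20]{Goo} they coincide on all of $\mathcal S(E)$. Hence $\tau_g$ preserves joins and is an $n$-state-morphism-operator on $A(E)$.

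Finally, for the isomorphism of $n$-state-morphism effect algebras I would use Proposition \ref{pr:5.1} once more: it gives $\widehat{\tau(a)}=\hat a\circ g=\tau_g(\hat a)$, so the effect-algebra isomorphism $\psi:a\mapsto\hat a$ of Theorem \ref{th:7.4} intertwines $\tau$ and $\tau_g$. As $\psi$ already preserves all existing joins and meets, it follows that $(E,\tau)$ and $(A(E),\tau_g)$ are isomorphic as $n$-state-morphism effect algebras, completing the proof.
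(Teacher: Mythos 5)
Your overall route coincides with the paper's: Theorem \ref{th:7.4} supplies the isomorphism $\psi:a\mapsto\hat a$ of $E$ onto $A(E)$, the min-criterion for extremal states gives (ESP), Proposition \ref{pr:5.1} supplies $g$ and $\tau_g$, the join-preservation of $\tau_g$ is checked pointwise on $\partial_e\mathcal S(E)$ and then propagated to all of $\mathcal S(E)$, and the intertwining $\psi\circ\tau=\tau_g\circ\psi$ finishes the proof. The one step that is actually wrong is the claim that the constraint in $(4.1)$ is vacuous, so that $A(E)=\Gamma(\Aff(\mathcal S(E)),1)$. That constraint refers to discrete \emph{extremal states of $E$}, not to discrete states of the effect algebra $\Gamma(\Aff(\mathcal S(E)),1)$, so weak divisibility of the latter is beside the point. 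Proposition \ref{pr:7.6} does not assume $E$ weakly divisible; for instance the three-element MV-chain $E=\{0,1/2,1\}$ satisfies all of its hypotheses, its unique state is discrete and extremal, and $A(E)$ consists only of the three constant functions $0,\ \tfrac12\cdot 1,\ 1$, a proper subalgebra of $\Gamma(\Aff(\mathcal S(E)),1)$.

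The error is localized and repairable, because you only use the identification to compute the join of $f_1,f_2\in A(E)$ as the pointwise maximum on $\partial_e\mathcal S(E)$, and that fact survives without it. One can argue as the paper does: $E$, hence $A(E)$, is a semisimple MV-algebra, and extremal states of MV-algebras send meets to minima and joins to maxima, so $(f_1\vee f_2)(s)=\max\{f_1(s),f_2(s)\}$ for $s\in\partial_e\mathcal S(E)$. Alternatively, the join of $f_1,f_2$ formed in $\Gamma(\Aff(\mathcal S(E)),1)$ takes the value $\max\{f_1(s),f_2(s)\}\in s(E)$ at every discrete extremal $s$, hence lies in $A(E)$ and is also the join there. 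With that substitution your argument goes through and is essentially the paper's proof; your derivation of (ESP) from \cite[Thm 12.18]{Goo} is the same computation the paper delegates to Proposition \ref{pr:3.6}, and your observation that $\psi$, being an order isomorphism, automatically preserves all existing joins and meets lets you bypass the paper's explicit verification of that point.
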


\begin{proof}  The mapping $\psi: a\mapsto \hat a,$ defined by $\hat
a(s):= s(a),$ $a \in E,$ $(s \in \mathcal S(E))$ is by Theorem
\ref{th:7.4} an isomorphism from $E$ into the effect-clan $A(E)$
defined by (4.1). Since $E$ is in fact an MV-algebra, $\tau$
satisfies (ESP) property, and by Proposition \ref{pr:5.1}, the
mapping $g:\mathcal S(E) \to \mathcal S(E),$ defined by $g(s):=
s\circ \tau,$ $s \in \mathcal S(E),$ is affine and continuous, $g^n
= g,$ and it maps any extremal state on $E$ into an extremal state.
Moreover, $g(s)(E)\subseteq s(E)$ for any discrete extremal state
$s.$

The mapping $\psi$ is an isomorphism of effect algebras.  We show that it also
preserves all existing meets and joins in $E.$ We know already from
Theorem \ref{th:7.4} and Remark \ref{re:7.5} that $E$ is in fact an
MV-algebra. Then for any discrete state $s$ on $E$ we have
$\psi(a\wedge b)(s) = s(a\wedge b) = \min\{s(a),s(b)\}=
\min\{\psi(a)(s),\psi(b)(s)\}$ due to basic properties of extremal
states on MV-algebras. Hence $(\psi(a)\wedge \psi(b))(s) =
\psi(a\wedge b)(s)$ for any discrete state $s.$  Since
$\psi(a)\wedge \psi(b)\in A(E),$ we have that  $(\psi(a)\wedge
\psi(b))(s) = \psi(a\wedge b)(s)$ for any  state $s$ on $E.$

Therefore, the mapping $\tau_g$ is a well-defined state-operator on
$A(E).$  For all $f_1,f_2 \in A(E)$ we have $(f_1\wedge f_2)(s) =
\min\{f_1(s),f_2(s)\}$ for any $s\in \mathcal S(E).$ Hence,
$(\tau_g(f_1\wedge f_2))(s)= (f_1\wedge f_2)(g(s)) =
\min\{f_1(g(s)),f_2(g(s))\} = (\tau_g(f_1)\wedge \tau_g(f_2))(s).$
So that $\tau_g$ is an $n$-state-morphism-operator on $A(E)$.

Now it is easy to verify that $\psi\circ \tau= \tau_g \circ \psi$
proving that $(E,\tau)$ and $(A(E),\tau_g)$ are isomorphic
$n$-state-morphism effect algebras because $\psi$ preserves all
existing meets and joins in $E.$
\end{proof}

Define a morphism $S: \mathcal {DSMEA}_n \to \mathcal{BSF}_n$ by
$S(E,\tau) = (\mathcal S(E),g),$ where $g$ is an affine continuous
function from $\mathcal S(E) \to \mathcal S(E)$ such that
$g(s)=s\circ \tau,$ $s \in \mathcal S(E),$ $g^n = g$ that is
guaranteed by Proposition \ref{pr:7.6}.

\begin{Proposition}\label{pr:7.7}
The function $S: \mathcal{DSMEA}_n\to \mathcal{BSF}_n$ defined by
$S(E,\tau) = ({\mathcal S}(E), g)$ is a contravariant functor from
$\mathcal{DSMEA}_n$ into $\mathcal{BSF}_n.$
\end{Proposition}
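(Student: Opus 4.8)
The plan is first to confirm that $S$ is well defined on objects, and then to define $S$ on morphisms and verify the morphism conditions together with functoriality. For an object $(E,\tau)$ of $\mathcal{DSMEA}_n$, the set $\mathcal S(E)$ is nonempty since $E$ carries an order determining system of states; Theorem~\ref{th:7.4} shows that $\mathcal S(E)$ is a Bauer simplex whose extreme boundary $\partial_e\mathcal S(E)$ is an F-space, and Proposition~\ref{pr:7.6} shows that $g(s):=s\circ\tau$ is affine, continuous, satisfies $g^n=g$, and (because $\tau$ enjoys (ESP)) carries $\partial_e\mathcal S(E)$ into itself. Hence $(\mathcal S(E),g)$ is an object of $\mathcal{BSF}_n$, and the object assignment requires no further argument.

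For a morphism $h:(E_1,\tau_1)\to(E_2,\tau_2)$ I would set $S(h)=h^{*}$, where $h^{*}(s):=s\circ h$ for $s\in\mathcal S(E_2)$. Since $h$ is an effect-algebra homomorphism, $s\circ h$ is a state on $E_1$, so $h^{*}:\mathcal S(E_2)\to\mathcal S(E_1)$ reverses the direction, as a contravariant functor demands. That $h^{*}$ is affine is immediate from $(\lambda s_1+(1-\lambda)s_2)\circ h=\lambda(s_1\circ h)+(1-\lambda)(s_2\circ h)$, and weak continuity follows since $s_\alpha\to s$ gives $h^{*}(s_\alpha)(a)=s_\alpha(h(a))\to s(h(a))=h^{*}(s)(a)$ for every $a\in E_1$.

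The one substantial point is that $h^{*}$ must map $\partial_e\mathcal S(E_2)$ into $\partial_e\mathcal S(E_1)$, and this is the step I expect to be the crux. By Theorem~\ref{th:7.4} and Remark~\ref{re:7.5} both $E_1$ and $E_2$ are semisimple MV-algebras, so a state is extremal precisely when it sends finite meets to minima (the characterization \cite[Thm 12.18]{Goo} used in the proof of Proposition~\ref{pr:3.4}). Given $s\in\partial_e\mathcal S(E_2)$ and $a,b\in E_1$, the hypothesis that a $\mathcal{DSMEA}_n$-morphism preserves meets yields $h^{*}(s)(a\wedge b)=s(h(a)\wedge h(b))=\min\{s(h(a)),s(h(b))\}=\min\{h^{*}(s)(a),h^{*}(s)(b)\}$, so $h^{*}(s)$ is again extremal. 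This is exactly where the requirement that morphisms of $\mathcal{DSMEA}_n$ preserve all existing meets and joins is used.

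It remains to check compatibility with the dynamics and functoriality. Writing $g_i$ for the map associated with $(E_i,\tau_i)$, for $s\in\mathcal S(E_2)$ one computes $(h^{*}\circ g_2)(s)=s\circ\tau_2\circ h$ and $(g_1\circ h^{*})(s)=s\circ h\circ\tau_1$, so the required identity $h^{*}\circ g_2=g_1\circ h^{*}$ in $\mathcal{BSF}_n$ is precisely the state-homomorphism relation $\tau_2\circ h=h\circ\tau_1$ that defines a morphism of $\mathcal{DSMEA}_n$; in particular $h^{*}$ is a genuine $\mathcal{BSF}_n$-morphism. Functoriality is then routine: $S(\id)=\id$ because $s\circ\id=s$, and for composable $h_1,h_2$ one has $S(h_2\circ h_1)(s)=s\circ h_2\circ h_1=S(h_1)\big(S(h_2)(s)\big)$, giving $S(h_2\circ h_1)=S(h_1)\circ S(h_2)$, the contravariant composition law.
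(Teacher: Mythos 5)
Your proof is correct and follows essentially the same route as the paper: the object assignment rests on Theorem~\ref{th:7.4} and Proposition~\ref{pr:7.6}, the morphism assignment is $S(h)(s'):=s'\circ h$, and affinity, weak continuity, and the intertwining identity $S(h)\circ g_2=g_1\circ S(h)$ are checked exactly as there (the paper's displayed computation is the same calculation, written in a notationally looser way). You are in fact more careful than the paper, which never verifies that $S(h)$ carries $\partial_e\mathcal S(E_2)$ into $\partial_e\mathcal S(E_1)$ — a condition required of $\mathcal{BSF}_n$-morphisms — nor the routine identity/composition laws; your argument for boundary preservation via the min-characterization of extremal states on semisimple MV-algebras together with the meet-preservation demanded of $\mathcal{DSMEA}_n$-morphisms supplies exactly the step the paper leaves implicit.
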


\begin{proof}  Let $(E,\tau)$ be an object from $\mathcal{DSMEA}_n.$ By
Theorem \ref{th:7.4}, $(E,\tau)$ is isomorphic with $(A(E),\tau_g),$
where $A(E)$ is defined by (4.1) and $g$ is an affine continuous
function on $\Omega:=\mathcal{S}(E)$ into itself such that it maps
$\partial_ e\mathcal S(E)$ into itself, $g^n= g$ and $g(s):=s\circ
\tau$ for any $s \in \mathcal{S}(A).$

Let $h$ be any morphism from $(E,\tau)$ into $(E',\tau').$  Define a
mapping  $S(h): \mathcal{S}(E') \to \mathcal{S}(E)$ by $S(h)(s'):=
s'\circ h,$ $s' \in \mathcal{S}(E').$  Then $S(h)$ is affine,
continuous and $g\circ S(h) = S(h)\circ g'.$ Indeed, let $s' \in
{\mathcal S}(E').$ Then $S(h) \circ g'\circ s' = (g' \circ s')\circ
h= g'\circ (s'\circ h)= s'\circ h \circ \tau = s' \circ \tau' \circ
h = S(h)\circ s'\circ \tau' = S(h) \circ g'.$
\end{proof}

Given an convex compact Hausdorff topological space $\Omega \ne
\emptyset,$ let
$$ E(\Omega) := \Gamma(\Aff(\Omega),1). \eqno(7.2)
$$
Then $E(\Omega)$ is a weakly divisible effect algebra with a
determining system of states.

Define a morphism $T: \mathcal{BSF}_n\to \mathcal{DSMEA}_n$ via
$T(\Omega,g)=(E(\Omega),\tau_g)$, where
$E(\Omega)=\Gamma(\Aff(\Omega),1),$ $\tau_g(f):=f\circ g,$ $f\in
E(\Omega),$ and if $p: (\Omega,g)\to (\Omega',g'),$ then $T(p)(f):
E(\Omega') \to E(\Omega)$ is defined by $T(p)(f) := f\circ p,$ $f
\in E(\Omega').$

\begin{Proposition}\label{pr:7.8} The function $T: \mathcal{BSF}_n\to
\mathcal{DSMEA}_n$ is a contravariant functor from $\mathcal{BSF}_n$
to $\mathcal{DSMEA}_n.$
\end{Proposition}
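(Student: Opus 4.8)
The plan is to establish the three defining properties of a contravariant functor: that $T$ carries objects of $\mathcal{BSF}_n$ to objects of $\mathcal{DSMEA}_n$, that it carries morphisms to morphisms while reversing source and target, and that it preserves identities and reverses composition. I would begin with the object assignment. Given $(\Omega,g)$ with $\Omega$ a Bauer simplex and $\partial_e\Omega$ an F-space, the effect algebra $E(\Omega)=\Gamma(\Aff(\Omega),1)$ satisfies (RDP) because a Bauer (hence Choquet) simplex makes $\Aff(\Omega)$ an interpolation group, indeed an $\ell$-group, by \cite[Thm 11.4, Thm 11.21]{Goo}; it satisfies countable interpolation by the implication (iii)$\Rightarrow$(i) of Theorem \ref{th:7.1}; and it carries an order determining system of states since $\Aff(\Omega)$ is Archimedean (Remark \ref{re:2.3}, Proposition \ref{pr:4.2}). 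It then remains to see that $\tau_g(f):=f\circ g$ is an $n$-state-morphism-operator. That $\tau_g$ is an endomorphism is immediate, since $g$ is affine and continuous forces $f\circ g\in E(\Omega)$ and $+$ and $1$ are preserved pointwise; $n$-potency follows from $\tau_g^n(f)=f\circ g^n=f\circ g=\tau_g(f)$ using $g^n=g$.

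The crucial point is that $\tau_g$ preserves all existing joins. Because $\Omega$ is Bauer, restriction to the compact set $\partial_e\Omega$ identifies $\Aff(\Omega)$ with $\mbox{C}(\partial_e\Omega)$ (every continuous function on $\partial_e\Omega$ extends uniquely to an affine continuous function on $\Omega$, as in the Tietze step of Theorem \ref{th:7.1}), and under this identification the join $f_1\vee f_2$ becomes the pointwise maximum on $\partial_e\Omega$. Since $g$ maps $\partial_e\Omega$ into itself, for $x\in\partial_e\Omega$ we have $(f_1\vee f_2)(g(x))=\max\{f_1(g(x)),f_2(g(x))\}=\big((f_1\circ g)\vee(f_2\circ g)\big)(x)$, and as affine continuous functions are determined by their values on $\partial_e\Omega$ this yields $\tau_g(f_1\vee f_2)=\tau_g(f_1)\vee\tau_g(f_2)$. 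Thus $\tau_g$ is an $n$-state-morphism-operator and $(E(\Omega),\tau_g)$ is an object of $\mathcal{DSMEA}_n$.

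Next I would treat morphisms. For $p:(\Omega,g)\to(\Omega',g')$ in $\mathcal{BSF}_n$, the map $T(p)(f)=f\circ p$ sends $E(\Omega')$ into $E(\Omega)$ by composition of affine continuous maps, is an effect-algebra homomorphism by pointwise preservation of $+$ and $1$, and preserves meets and joins by exactly the argument of the previous paragraph, now using $p(\partial_e\Omega)\subseteq\partial_e\Omega'$. The intertwining relation is the single computation that invokes $p\circ g=g'\circ p$: for $f\in E(\Omega')$,
\[
(T(p)\circ\tau_{g'})(f)=f\circ g'\circ p=f\circ p\circ g=(\tau_g\circ T(p))(f),
\]
so $T(p)$ is a morphism $(E(\Omega'),\tau_{g'})\to(E(\Omega),\tau_g)$ in $\mathcal{DSMEA}_n$. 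Finally $T(\id)=\id$ is clear, and for composable $p,q$ one has $T(q\circ p)(f)=f\circ(q\circ p)=(f\circ q)\circ p=\big(T(p)\circ T(q)\big)(f)$, confirming that $T$ reverses composition and hence is a contravariant functor.

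I expect the main obstacle to be the join-preservation step, since it is the only place where the simplex structure and the boundary-preservation hypotheses on $g$ and $p$ are genuinely used; once the identification $\Aff(\Omega)\cong\mbox{C}(\partial_e\Omega)$ for Bauer simplices is in hand, everything else is formal pointwise bookkeeping.
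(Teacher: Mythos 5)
Your proposal is correct and follows essentially the same route as the paper: the object assignment rests on Theorem \ref{th:7.1} for countable interpolation and on the pointwise lattice structure of $\mbox{C}(\partial_e\Omega)$ for the join-preservation of $\tau_g$ (a step the paper outsources to Proposition \ref{pr:7.6}, whose own proof uses exactly your pointwise min/max on extremal states), while the morphism assignment is the same associativity computation $f\circ g'\circ p=f\circ p\circ g$. If anything you are more complete than the paper, which does not explicitly check that $T(p)$ preserves meets and joins, nor that $T$ respects identities and composition.
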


\begin{proof}  If $(\Omega,g)$ is an object from $\mathcal{DSMEA}_n,$
then $E(\Omega)$ is a (weakly) divisible effect algebra satisfying
(RDP) and with an ordering system of states. In addition, by Theorem
\ref{th:7.1}, $E(\Omega)$ satisfies countable interpolation. The
mapping $\tau_g(f):= f\circ g,$ $f\in E(\Omega),$ is by Proposition
\ref{pr:7.6} a  state-morphism-operator on $E(\Omega).$ Therefore,
$T(\Omega,g)=(E(\Omega),\tau_g)\in \mathcal{DSMEA}_n.$

Now let  $p: (\Omega,g) \to (\Omega',g')$ be a morphism, i.e. an
affine continuous function $p:\Omega \to \Omega'$ such that
$p:\partial_e \Omega \to \partial_e \Omega'$ and $p \circ g= g'\circ
p.$ We assert $\tau_{g} \circ T(p) = T(p)\circ \tau_{g'}.$ Check:
for any $f \in E(\Omega'),$ we have $\tau_{g} \circ T(p)\circ f=
\tau_{g} \circ (T(p) \circ f) = \tau_{g} \circ (f\circ p) = (f \circ
p) \circ g = f\circ (p \circ g) = f \circ (g' \circ p)= (f\circ
g')\circ p= T(p) \circ (f\circ g')= T(p) \circ (\tau_{g'} \circ f) =
T(p) \circ \tau_{g'} \circ f .$
\end{proof}

\begin{Remark}\label{re:7.9}
It is worthy to remark that due to \cite[Thm 7.1]{Goo}, if $\Omega$
is a compact convex subset of a locally convex Hausdorff space, then
the evaluation mapping $p:\ \Omega \to \mathcal S(E(\Omega)) $
defined by $p(x)(f)=f(x)$ for all $f \in E(\Omega)$ $(x \in \Omega)$
is an affine homeomorphism of $\Omega$ onto $\mathcal S(E(\Omega)).$
\end{Remark}

\begin{Theorem}[Stone Duality Theorem]\label{th:7.10}
The categories $\mathcal{BSF}_n$ and $\mathcal{DSMEA}_n$ are dual.
\end{Theorem}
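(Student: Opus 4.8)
The plan is to show that the contravariant functors $S$ and $T$ are mutually quasi-inverse, i.e. that $T\circ S$ and $S\circ T$ are naturally isomorphic to the identity functors on $\mathcal{DSMEA}_n$ and $\mathcal{BSF}_n$, respectively. Since both $S$ and $T$ are already known to be contravariant functors (Propositions \ref{pr:7.7} and \ref{pr:7.8}), this is exactly what a duality requires. I would construct the two natural isomorphisms explicitly and then verify naturality by a direct diagram chase; the conceptual content is already carried by Proposition \ref{pr:7.6} and Remark \ref{re:7.9}, so the remaining work is essentially bookkeeping.

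For $T\circ S$, start with an object $(E,\tau)$. Because every object of $\mathcal{DSMEA}_n$ is (weakly) divisible it has no discrete extremal state, so by $(4.1)$ we have $A(E)=\Gamma(\Aff(\mathcal S(E)),1)=E(\mathcal S(E))$; hence $T(S(E,\tau))=(A(E),\tau_g)$ with $g(s)=s\circ\tau$. Proposition \ref{pr:7.6} already furnishes an isomorphism of $n$-state-morphism effect algebras $\psi_{(E,\tau)}:(E,\tau)\to(A(E),\tau_g)$, $\psi_{(E,\tau)}(a)=\hat a$. To see that $\psi$ is natural, take a morphism $h:(E_1,\tau_1)\to(E_2,\tau_2)$; then for $a\in E_1$ and $s'\in\mathcal S(E_2)$ one computes $\bigl(T(S(h))(\hat a)\bigr)(s')=(\hat a\circ S(h))(s')=\hat a(s'\circ h)=(s'\circ h)(a)=s'(h(a))=\widehat{h(a)}(s')$, which equals $\psi_{(E_2,\tau_2)}(h(a))(s')$. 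Thus $T(S(h))\circ\psi_{(E_1,\tau_1)}=\psi_{(E_2,\tau_2)}\circ h$, and $\psi$ is a natural isomorphism $\id\Rightarrow T\circ S$.

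For $S\circ T$, start with $(\Omega,g)$ and put $E(\Omega)=\Gamma(\Aff(\Omega),1)$, so that $S(T(\Omega,g))=(\mathcal S(E(\Omega)),g')$ with $g'(s)=s\circ\tau_g$. By Remark \ref{re:7.9} the evaluation map $\epsilon_{(\Omega,g)}:\Omega\to\mathcal S(E(\Omega))$, $\epsilon_{(\Omega,g)}(x)(f)=f(x)$, is an affine homeomorphism, and being affine it carries $\partial_e\Omega$ onto $\partial_e\mathcal S(E(\Omega))$. It remains to verify the intertwining $\epsilon_{(\Omega,g)}\circ g=g'\circ\epsilon_{(\Omega,g)}$: for $x\in\Omega$ and $f\in E(\Omega)$, $\bigl(g'(\epsilon(x))\bigr)(f)=(\epsilon(x)\circ\tau_g)(f)=\epsilon(x)(f\circ g)=(f\circ g)(x)=f(g(x))=\bigl(\epsilon(g(x))\bigr)(f)$, so $\epsilon_{(\Omega,g)}$ is an isomorphism in $\mathcal{BSF}_n$. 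Naturality is the analogous computation: for a morphism $q:(\Omega_1,g_1)\to(\Omega_2,g_2)$, $x\in\Omega_1$ and $f\in E(\Omega_2)$ one gets $\bigl(S(T(q))(\epsilon_{\Omega_1}(x))\bigr)(f)=\bigl(\epsilon_{\Omega_1}(x)\circ T(q)\bigr)(f)=\epsilon_{\Omega_1}(x)(f\circ q)=(f\circ q)(x)=f(q(x))=\bigl(\epsilon_{\Omega_2}(q(x))\bigr)(f)$, giving $S(T(q))\circ\epsilon_{\Omega_1}=\epsilon_{\Omega_2}\circ q$. Hence $\epsilon$ is a natural isomorphism $\id\Rightarrow S\circ T$.

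Having both natural isomorphisms, $S$ and $T$ exhibit $\mathcal{DSMEA}_n$ and $\mathcal{BSF}_n$ as dual categories. The only genuinely nontrivial ingredients are the representation $(E,\tau)\cong(A(E),\tau_g)$ and the fact that $(E(\Omega),\tau_g)$ indeed lands in $\mathcal{DSMEA}_n$, both already secured by Theorem \ref{th:7.4}, Proposition \ref{pr:7.6} and Theorem \ref{th:7.1}. I expect the main obstacle to be purely a matter of keeping the intertwining relations $\psi\circ\tau=\tau_g\circ\psi$ and $\epsilon\circ g=g'\circ\epsilon$ straight, since everything else reduces to the two evaluation computations above.
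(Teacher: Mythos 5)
Your proposal is correct and follows essentially the same route as the paper: both arguments reduce the duality to the object-level isomorphisms $(E,\tau)\cong(A(E),\tau_g)$ from Proposition \ref{pr:7.6} and $(\Omega,g)\cong(\mathcal S(E(\Omega)),g')$ via the evaluation map of Remark \ref{re:7.9}, with the same intertwining computations. Your version is in fact slightly more complete, since you verify the naturality squares explicitly, which the paper leaves implicit when invoking MacLane's criterion.
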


\begin {proof}
We show that the conditions of \cite[Thm IV.1]{Mac} are fulfilled,
i.e. $T\circ S(E,\tau) \cong (E,\tau)$ and $S\circ T(\Omega,g) \cong
(\Omega,g)$ for all $(E,\tau) \in \mathcal{DSMEA}_n$ and $(\Omega,g)
\in \mathcal{BSF}_n.$

(i) Propositions \ref{pr:7.7}--\ref{pr:7.8} entail that if $(E,\tau)
\in \mathcal{DSMEA}_n,$ then $T\circ S(E,\tau) = T(\mathcal{S}(E),g)
= (E(\mathcal{S}(E)),\tau_g) \cong (E,\tau).$

(ii)  Now let $(\Omega,g)$ be any object from $\mathcal{BSF}_n.$  By
Remark \ref{re:7.9}, $\Omega$ and $\mathcal S(E(\Omega))$ are
affinely homeomorphic under the evaluation mapping $p:\Omega \to
\mathcal S(E(\Omega)).$  We assert $p\circ g=g'\circ p.$

Let  $x \in \Omega$ and $f \in E(\Omega)$ be arbitrary. Then
$s=p(x)$ is a state from $\mathcal S(E(\Omega)).$  The function $g':
\mathcal S(E(\Omega)) \to \mathcal S(E(\Omega))$ is defined by the
property $g'(s)=s\circ \tau_g.$  Since $g'(s)= g'(p(x)),$ we get
\begin{eqnarray*} (g'\circ p)(x)(f)&=& g'(p(x))(f) = (g'(s))(f)= (s\circ \tau_g)(f)\\
 &=&p(x) \circ (\tau_g(f))
= p(x) \circ (f\circ g) =f(g(x)).
\end{eqnarray*}
On the other hand,
$$ (p\circ g)(x)(f) = p(g(x))(f)= f(g(x))$$
that proves $p\circ g=g'\circ p.$  Hence, the categories
$\mathcal{BSF}_n$ and $\mathcal{DSMEA}_n$ are dual.
\end{proof}



\begin{thebibliography}{BaWe}

\bibitem[Alf]{Alf} E.M. Alfsen, {\it ``Compact Convex Sets and
Boundary Integrals",} Springer-Verlag, Berlin, 1971.


\bibitem[BaWe]{BaWe} G. Barbieri, H. Weber,
{\it Measures on clans and on MV-algebras}, in:  Handbook of
Measure Theory, E. Pap (Editor), Elsevier Science, Amsterdam, 2002,
Vol. II, pp. 911--945.



\bibitem[BCD]{BCD} D. Buhagiar,  E. Chetcuti,  A. Dvure\v censkij,
{\it Loomis-Sikorski representation of  monotone $\sigma$-complete
effect algebras,} { Fuzzy Sets and Systems} {\bf 157} (2006),
683--690.

\bibitem[BSW]{BSW} L.J. Bunce, K. Sait\^o, and J.D.M. Wright, {\it A
Dixmier-Schaefer-Zhang theorem for operator algebras,} Proc. Amer.
Math. Soc. {\bf 127} (1999), 2975--2979.


\bibitem[DiDv]{DiDv}
Di Nola, A., Dvure\v{c}enskij, A.: \emph{State-morphism MV
algebras}, J. Pure Appl. Logic \textbf{161} (2009), 161--173. DOI:
10.1016/j.apal.F.05.003

\bibitem[DDL1]{DDL1} A. Di Nola, A. Dvure\v{c}enskij, A. Lettieri,
{\it On  varieties of MV-algebras with internal states,} Inter. J.
Approx. Reasoning.  DOI: 10.1016/j.ijar.2010.01.017

\bibitem[DDL2]{DDL2} A. Di Nola, A. Dvure\v{c}enskij, A. Lettieri,
{\it On the  Loomis--Sikorski theorem  for MV-algebras with internal
state,} submitted.

\bibitem[DDL3]{DDL3} A. Di Nola, A. Dvure\v{c}enskij, A. Lettieri,
{\it Stone duality type theorems for  MV-algebras with internal
state,} submitted.

\bibitem[Dvu0]{Dvu0} A. Dvure\v censkij,  {\it Measures and $\perp$-decomposable
measures  on effects  of a Hilbert space,} Atti Sem. Mat. Fis.
Univ. Modena {\bf 45} (1997), 259--288.


\bibitem[Dvu1]{Dvu1} A. Dvure\v censkij,    {\it Loomis--Sikorski
theorem  for $\sigma$-complete MV-algebras and $\ell$-groups}, J.
{Austral. Math. Soc. Ser. A} {\bf 68}  (2000), 261--277.


\bibitem[Dvu2]{Dvu2} A. Dvure\v censkij, {\it  Perfect effect algebras are
categorically equivalent with Abelian interpolation po-groups,} J.
Austral. Math. Soc. {\bf 82} (2007), 183--207.

\bibitem[DvPu]{DvPu}
A. Dvure\v censkij, S. Pulmannov\'a, {\it ``New Trends in Quantum
Structures"}, Kluwer Acad. Publ., Dordrecht, Ister Science,
Bratislava, 2000.

\bibitem[FlMo]{FlMo}
T. Flaminio, F. Montagna, {\em MV-algebras with internal states and
probabilistic fuzzy logic}, Inter. J. Approx. Reasoning {\bf 50}
(2009), 138--152.

\bibitem[FoBe]{FoBe}  D.J. Foulis, M.K. Bennett,
{\it  Effect algebras and unsharp quantum logics}, Found. Phys. {\bf
24} (1994), 1325--1346.


\bibitem[GiHe]{GiHe} L. Gillman, M. Henriksen, {\it Rings of
continuous functions in which every finitely generated ideal is
principal,} Trans. Amer. Math. Soc. {\bf 82} (1956), 366--391.

\bibitem[Goo]{Goo}
 K.R. Goodearl,
{\it ``Partially Ordered Abelian Groups with Interpolation",}
 Math. Surveys and Monographs No. 20, Amer. Math. Soc.,
 Providence, Rhode Island, 1986.

\bibitem[Loo]{Loo}
L.H. Loomis, {\it On the representation of $\sigma$-complete Boolean
algebras}, {Bull. Amer. Math. Soc.} {\bf 53} (1947), 757--760.

\bibitem[LuZa]{LuZa}
W.A.J. Luxemburg, A.C. Zaanen,
{\it ``Riesz Spaces, I,"}
 North-Holland, Amsterdam, London, 1971.


\bibitem[Mac]{Mac}
S. MacLane,  {\em ``Categories for the Working Mathematician",}
Springer-Verlag, New York, 1971.

\bibitem[Mun]{Mun} D. Mundici, {\it   Interpretation of AF $C^*$-algebras in \L ukasiewicz sentential calculus,} J. Funct. Anal. {\bf 65} (1986),
15--63.

\bibitem[Mun]{Mun1} D. Mundici,
{\it Averaging the truth-value in \L ukasiewicz logic}, {Studia
Logica} {\bf 55} (1995), 113--127.


\bibitem[Rav]{Rav} K. Ravindran,
{\it  On a structure theory of effect algebras}, PhD theses, Kansas
State Univ., Manhattan, Kansas, 1996.

\bibitem[See]{See} G.L. Seever, {\it Measures on F-spaces,} Trans.
Amer. Math. Soc. {\bf 133} (1968), 267--280.

\bibitem[Sik]{Sik}
R. Sikorski, {\it ``Boolean Algebras",} Springer--Verlag, Berlin,
Heidelberg, New York, 1964.


\bibitem[Sto]{Sto} M.H. Stone,  {\it  The theory of representations of
Boolean algebras,} Trans. Amer. Math. Soc. {\bf 40} (1936), 37--111.

\bibitem[Wri]{Wri} J.D.M. Wright, {\it On approximating concave
functions by convex functions,} Bull. London Math. Soc. {\bf 5}
(1973), 221--222.


\end{thebibliography}
\end{document}